\newtheorem{theorem}{Theorem}[section]
\newtheorem{lemma}[theorem]{Lemma}
\newtheorem{corollary}[theorem]{Corollary}
\newtheorem{proposition}[theorem]{Proposition}
\theoremstyle{definition}
\newtheorem{remark}[theorem]{Remark}
\newtheorem{definition}[theorem]{Definition}
\numberwithin{equation}{section}
\def\Ric{{\operatorname{Ric}}}
 \def\Cut{{\operatorname{Cut}}}
\newcommand\1{\hbox{\kern.375em\vrule height1.57ex depth-.1ex
    width.05em\kern-.375em \rm 1}}
 \newcommand\E{\mathbb{E}}
\newcommand\N{\mathbb{N}} \newcommand\R{\mathbb{R}} \newcommand\Z{\mathbb{Z}}
\renewcommand\P{\mathbb{P}}
\newcommand\newdot{{\kern.8pt\cdot\kern.8pt}}
\def\mathpal#1{\mathop{\mathchoice{\text{\rm #1}}%
    {\text{\rm #1}}{\text{\rm #1}}%
    {\text{\rm #1}}}\nolimits} \def\id{{\mathpal{id}}}
 \def\OtM{{\mathpal{O}}_t(M)}
\def\FM{\mathpal{F}(M)} \def\FM{F(M)}
 \def\vd{\mathrm{d}} \def\r{\right}
\def\l{\left} \def\e{\operatorname{e}} \def\dsum{\displaystyle\sum}
\begin{document}

  \title[Evolution systems of measures and semigroup properties]
  {Evolution systems of measures and semigroup properties\\ on
    evolving manifolds}

  \author[L.-J.~Cheng and A.~Thalmaier]{Li-Juan
    Cheng\textsuperscript{1,2} and Anton Thalmaier\textsuperscript{1}}

  \address{\textsuperscript{1}Mathematics Research Unit, FSTC, University of Luxembourg,\\
    Maison du Nombre, 4364 Esch-sur-Alzette, Grand Duchy of
    Luxembourg}
  \address{\textsuperscript{2}Department of Applied Mathematics, Zhejiang University of Technology,\\
    Hangzhou 310023, The People's Republic of China}
  \email{lijuan.cheng@uni.lu \text{and} chenglj@zjut.edu.cn}
  \email{anton.thalmaier@uni.lu}

  \begin{abstract}
    An evolving Riemannian manifold $(M,g_t)_{t\in I}$ consists of a
    smooth $d$-dimensional manifold $M$, equipped with a geometric
    flow $g_t$ of complete Riemannian metrics, parametrized by
    $I=(-\infty,T)$. Given an additional $C^{1,1}$ family of vector
    fields $(Z_t)_{t\in I}$ on $M$. We study the family of operators
    $L_t=\Delta_t +Z_t $ where $\Delta_t$ denotes the Laplacian with
    respect to the metric $g_t$.  We first give sufficient conditions,
    in terms of space-time Lyapunov functions, for non-explosion of
    the diffusion generated by $L_t$, and for existence of evolution
    systems of probability measures associated to it.  Coupling
    methods are used to establish uniqueness of the evolution systems
    under suitable curvature conditions.  Adopting such a unique
    system of probability measures as reference measures, we
    characterize supercontractivity, hypercontractivity and
    ultraboundedness of the corresponding time-inhomogeneous
    semigroup. To this end, gradient estimates and a family of
    (super-)logarithmic Sobolev inequalities are
    established.  \end{abstract}

  \keywords{Evolution system of measures, geometric flow,
    inhomogeneous diffusion, semigroup, supercontractivity,
    hypercontractivity, ultraboundedness} \subjclass[2010]{60J60,
    58J65, 53C44} \date{\today}

  \maketitle

\section{Introduction}\label{Sect1}
Let $M$ be a $d$-dimensional differentiable manifold equipped with a
family of complete Riemannian metrics $(g_t)_{t\in I}$ which is $C^1$
in $t$ and evolves according to
$$\frac{\partial}{\partial t}g_t=2h_t,\quad t\in I,$$
where $I=(-\infty,T)$ for some $T\in (-\infty,+\infty]$, and $h_t$ a
time-dependent 2-tensor on $TM$.  Denote by $\nabla^t$, $\Delta_t$ the
Levi-Civita connection, resp. Laplacian on $M$, both with respect to
the metric~$g_t$.  For a given $C^{1,1}$ family $(Z_t)_{t\in I}$ of
vector fields on $M$, we study the time-dependent second order
differential elliptic operator $L_t=\Delta_t+Z_t$.

In this paper, we develop the basis for a general theory of the
following backward Cauchy problem:
\begin{equation}\label{heat-1}
  \left\{  \begin{aligned}
      &\partial_su(\newdot, x)(s)=-L_su(s,\newdot)(x) \\
      &u(t,x)=\phi(x)
    \end{aligned}\right\}, \quad (s,t)\in \Lambda,\ x \in M,
\end{equation}
where $\phi\in C^2(M)\cap C_b(M)$ and
$\Lambda:=\{(s,t): s\leq t\ \text{and}\ s,t\in I\}$.

We investigate this problem from a probabilistic point of view.  Let
$X_t$ be the diffusion process generated by $L_t$ (called
$L_t$-diffusion) which is assumed to be non-explosive before time $T$
(see \cite{ACT,C11,KP11} for details).  As in the time-homogeneous
case, we construct $L_t$-diffusions $X_t$ via horizontal diffusions
$u_t$ above $X_t$.

Let $\FM$ be the frame bundle over $M$ and $\OtM$ the orthonormal
frame bundle with respect to the metric $g_t$.  We denote by
$\pi\colon \FM\rightarrow M$ the projection from $\FM$ onto $M$.  For
a frame $u\in \OtM$, denote by $H^{t}_{Y}(u)$ the
$\nabla^{t} $-horizontal lift of $Y\in T_{{\pi}u}M$.  This allows one
to determine standard-horizontal vector fields $H_{i}^t$ on $\OtM$,
via the formula
$$H_{i}^t(u)=H_{ue_i}^t(u),\quad i=1,2,\ldots,d,$$
where $(e_{i})_{i=1}^{d}$ denotes the canonical orthonormal basis of
$\R^d$.  Furthermore, we denote by
$(V_{\alpha,\, \beta})_{\alpha,\, \beta=1}^d$ the standard-vertical
fields on $\FM$.  Then given $s\geq0$, the diffusion $u_t$ is
constructed for $t\geq s$ as the solution to the following
Stratonovich SDE:
\begin{align}\label{SDE-u}
  \begin{cases}
    \vd u_t=\sqrt2\dsum_{i=1}^{d}H_{i}^t(u_t)\circ \vd B_t^{i}+H_{Z_t
    }^t(u_t)\,\vd t-\frac12\dsum_{\alpha,
      \beta=1}^{d}(\partial_tg_t)(u_te_{\alpha}, u_te_{\beta})V_{\alpha,\, \beta}(u_t)\,\vd t,\medskip\\
    u_s\in {{\mathpal{O}}_s(M)},\ \pi u_s=x,
  \end{cases}
\end{align}
where $B_t$ is a standard Brownian motion on $\R^d$. The
projection $X_t:=\pi u_t$ of $u_t$ onto $M$ then gives the wanted
$L_t$-diffusion process on $M$, see \cite{ACT}. In the next section we
complement existing results on non-explosion of $X_t$ which is a
subject already studied in \cite{KP11}.

The backward Cauchy problem \eqref{heat-1} is the Kolmogorov equation
to the following non-autonomous SDE on $M$:
\begin{align}\label{SDE-X}
  \vd X_t=u_t\circ \vd B_t +Z_t(X_t)\,\vd t,\quad X_s=x.
\end{align}
Denote by $X_{t}^{(s,x)}$ the solution to Eq.~\eqref{SDE-X} which is
assumed to be non-explosive before time $T$. Then function
$$u(s,x):=\E[\phi(X_{t}^{(s,x)})]$$ satisfies Eq.~\eqref{heat-1} and 
gives rise to a family of inhomogeneous Markov evolution operators
$(P_{s,t})_{(s,t)\in \Lambda}$ on $M$:
$$P_{s,t}\phi(x):=\E[\phi(X_t^{(s,x)})]=\E^{(s,x)}[\phi(X_t)].$$

This is completely standard in the case of a fixed metric and a
time-independent operator $L_t=L$ where
$P_{s,t}=P_{t-s}={\e}^{(t-s)L}$ and $L^p$-spaces are taken with
respect to an invariant measure $\mu$, i.e., a Borel probability
measure $\mu$ on $M$ such that
$$\int_M P_t\phi\,\vd \mu=\int_M\phi\,\vd \mu,\quad t>0,\ \phi\in \mathcal{B}_b(M).$$
Under suitable conditions, see \cite{BRW01a,BRW01b}, existence and
uniqueness of the invariant measure can be shown. In this case, $P_t$
extends to a contraction semigroup on $L^p(M,\mu)$ for every
$p\in[1,\infty)$, see e.g. \cite{Bakry97,LO98,RW03,Wang01,Wbook1}.

When it comes to the time-inhomogeneous case, the situation turns out
to be more involved.  For instance, Saloff-Coste and Z{\'u}{\~n}iga
\cite{SZ11b,SZ11a} studied the ergodic behavior of time-inhomogeneous
Markov chains; more sophisticated and strict conditions are required
due to the fact that the generator and the semigroup do not commute
and due to the lack of uniqueness of the invariant measure.  A first
goal will be therefore to construct an evolution system of measures as
a family of reference measures which plays a role similar to the
invariant measure in the time-homogeneous case.

Let us start by reviewing the notion of an evolution system of
measures. A family of Borel probability measures $(\mu_t)_{t\in I}$ on
$M$ is called an evolution system of measures (see~\cite{DaR08}) if
\begin{align}\label{invariant-measure}
  \int_M P_{s,t}\phi\,\vd \mu_s=\int_M \phi\,\vd \mu_t,\quad \phi\in \mathcal{B}_b(M),\ (s,t)\in \Lambda.
\end{align}
Recently, Angiuli, Lorenzi, Lunardi et al.~investigated evolution
system of measures and related topics for non-autonomous parabolic
Kolmogorov equations with unbounded coefficients on $\R^d$ (see
\cite{LLA13,KLL10,LLZ16,LAA10}). For instance, in \cite{KLL10}
sufficient conditions for existence and uniqueness of evolution
systems of measures are given; in \cite{LLA13}, using a unique tight
evolution system of measures as reference measures, hypercontractivity
and the asymptotic behavior are studied; the asymptotics in
time-periodic parabolic problems with unbounded coefficients is
addressed in \cite{LAA10}. All this work motivates us to study
evolution systems of measures on evolving manifolds and to investigate
contractivity properties of the semigroup. Our probabilistic approach
simplifies and extends in particular earlier results obtained by
analytic methods.

We start by formulating some hypotheses which will be needed later
on. Let $\rho_t(x,y)$ be the Riemannian distance from $x$ to $y$ with
respect to the metric $g_t$.  Fixing $o\in M$, we write
$\rho_t(x):=\rho_t(o,x)$ for simplicity. Let $\Cut_t$ be the set of
the cut-locus of $(M,g_t)$. Let
  $$\Cut:=\{(x,t)\colon x\in \Cut_t\}.$$
  At different places in the paper, some of the hypotheses listed
  below will be put in force.  \smallskip

  \begin{enumerate}[\bf(H1)]
  \item There exists an increasing function
    $\varphi\in C^2(\R^+)$ such that
$$\lim_{r\rightarrow +\infty}\varphi(r)=+\infty\ \text{ and }\   (L_t+\partial_t)(\varphi\circ \rho_t)(x)\leq m(t),
\quad (x,t)\in M\times I\setminus \Cut,$$
for some continuous function $m$ on $I$.
\item There exists an increasing function $\varphi\in C^2({\R}^+)$ such
  that $\varphi(0)=0$,
$$\lim_{r\rightarrow +\infty}\varphi(r)=+\infty\ \text{ and }\ (L_t+\partial_t)(\varphi\circ \rho_t)(x)\leq a(t)-c(t)(\varphi\circ \rho_t)(x),
\quad (x,t)\in M\times I\setminus \Cut,$$
for some non-negative function $a$ and a function $c$ on $I$ such that
$$H(t):=\int_{-\infty}^t\exp{\l(-\int_r^tc(u)\,\vd u\r)}a(r)\,\vd r<\infty.$$

\item There exists a function $k$ on $I$ such that
   $$\mathcal{R}_t^{Z}:=\Ric_t-h_t-\nabla^t Z_t\geq k(t),\quad t\in I.$$
   For any $\epsilon>0$, positive function $\ell$ on $I$ and $t\in I$,
   let
   \begin{align*}
     &A_1=2k-\ell,\quad B_1(t)=2d+\frac1{4}\l(3(d-1)\epsilon^{-1}+3k_{\epsilon}(t)\epsilon+2|Z_t|_t(o)\r)^2\ell^{-1}(t),
   \end{align*}
   where
   $$k_{\epsilon}(t)=\sup\left\{\l|\Ric_t\r|(x):\rho_t(x)\leq
     \epsilon\right\}.$$
   There exists a positive constant $\epsilon$ and a positive function
   $\ell$ on $I$ such that
   \begin{align}\label{eq4}
     H_1(t):=\int_{-\infty}^{t}\exp\l(-\int_r^{t}A_1(s)\,\vd s\r)B_1(r)\,\vd r<+\infty.
   \end{align}
 \end{enumerate}

\begin{remark}\label{rem2}
  In {\bf(H1)} and {\bf(H2)} the Lyapunov function
  $\varphi\circ \rho_t$ is by definition time-dependent.
  \begin{enumerate}[(a)]
  \item [(a)] From condition \eqref{eq4} it can be seen that the
    function $k$ in {\bf (H3)} must satisfy
    \begin{equation}\label{k-inter-esti}
      \int_{-\infty}^t\exp\l(-2\int_r^tk(s)\,\vd s\r)\,\vd r<\infty\ \text{ and }\ \int_{-\infty}^tk(s)\,\vd s=+\infty,\quad  t\in I.
    \end{equation}
  \item [(b)] Hypothesis {\bf(H1)} gives a sufficient condition for
    non-explosion of $L_t$-diffusions.  Hypothesis {\bf(H2)} ensures
    existence of an evolution system of measures $(\mu_t)_{t\in I}$,
    whereas {\bf(H3)} guarantees uniqueness of the evolution system of
    measures $(\mu_t)_{t\in I}$.
  \item [(c)] As indicated, the Lyapunov function
    $\varphi\circ \rho_t$ is time-dependent. Comparatively, in
    \cite{KLL10} the Euclidean distance is used as reference distance
    and then a space only Lyapunov condition is sufficient for
    existence and uniqueness of an evolution system of measures. In
    \cite{LLA13, KLL10} the coefficients in the Lyapunov condition are
    uniformly bounded, and as consequence a time-homogeneous process
    can be used for comparison with the original process. In our
    setting, the coefficients in the Lyapunov conditions need to be
    time-dependent to preserve the information about the varying
    space.
  \end{enumerate}
\end{remark}


In general, evolution systems of measures are far from being
unique. If there is a unique system it plays an important
role. Indeed, it is related to the asymptotic behavior of $P_{s,t}$ as
$s\rightarrow -\infty$. We shall prove that if Hypothesis {\bf(H3)}
holds, then for $x\in M$ and $(s,t)\in \Lambda$,
$$\lim_{s\rightarrow -\infty}\|P_{s,t}f(x)-\mu_t(f)\|_{L^2(M,\mu_s)}=0,$$
where $\mu_t(f)$ denotes the average of $f$ with respect to the
measure $\mu_t$.

In Sections \ref{Sect3}-\ref{Sect5} we use Hypothesis {\bf(H3)} as
standing assumption.  Taking the unique evolution system of measures
$(\mu_s)_{s\in I}$ as reference measures, we study contractivity
properties of the time-inhomogeneous semigroup $P_{s,t}$. For the sake
of brevity, we introduce the following notations:
\begin{align*}
  \|P_{s,t}\|_{(p,t)\rightarrow(q,s)}:=\|P_{s,t}\|_{L^p(M,\,\mu_t)\rightarrow L^q(M,\,\mu_s)},
  \quad \|P_{s,t}\|_{(p,t)\rightarrow \infty}:=\|P_{s,t}f\|_{L^p(M,\,\mu_t)\rightarrow L^{\infty}(M)}.
\end{align*}
\begin{definition}
  The evolution operator $P_{s,t}$ is called:
  \begin{enumerate}[(i)]
  \item \textit{hypercontractive} if it maps $L^p(M,\mu_t)$ into
    $L^q(M,\mu_s)$ for some $1<p<q<+\infty$ and $(s,t)\in \Lambda$
    such that
  $$\|P_{s,t}\|_{(p,t)\rightarrow(q,s)}\leq 1;$$
\item \textit{supercontractive} if it maps $L^p(M,\mu_t)$ into
  $L^q(M,\mu_s)$ for any $1<p<q<+\infty$ and $(s,t)\in \Lambda$, and
  if there exists a positive function
  $C_{p,q}: \Lambda\rightarrow (0,+\infty)$ such that
      $$\|P_{s,t}\|_{(p,t)\rightarrow(q,s)}\leq C_{p,q}(s, t);$$
    \item \textit{ultrabounded} if it maps $L^p(M,\mu_t)$ into
      $L^{\infty}(M)$ for every $p>1$ and $(s,t)\in \Lambda$, and if
      there exists a function
      $C_{p,\infty}: \Lambda\rightarrow (0,+\infty)$ such that
      \begin{align}\label{ultra}
        \|P_{s,t}f\|_{(p,t)\rightarrow \infty}\leq C_{p,\infty}(s, t).
      \end{align}
    \end{enumerate}

  \end{definition}
  \begin{remark}\label{rem4}
    It is easy to see that the function $C_{p,q}$ necessarily has the
    following properties due to contractivity of the semigroup:
    \begin{enumerate}[(i)]
    \item For fixed $s\in I$, the function
      $C_{p,q}(s,s+\newdot)\colon (0,\infty)\rightarrow(0,\infty)$ is
      a non-increasing function;
    \item for fixed $t\in I$, the function
      $C_{p,q}(t-\newdot,t)\colon (0,\infty)\rightarrow(0,\infty)$ is
      a non-increasing function.
    \end{enumerate}
    Note that the function $C_{p,q}$ takes into account both the
    position and the length of the interval $[s,t]$.
  \end{remark}

  In what follows, we use the abbreviation
$$\|\newdot\|_{p,\,s}:=\|\newdot\|_{L^p(M,\,\mu_s)}.$$
In Section \ref{Sect4}, we extend the arguments of \cite{RW03} to
consider hypercontractivity and supercontractivity via logarithmic
Sobolev inequalities (in short log-Sobolev inequalities).  In fact,
under the assumption that $\mathcal{R}_t^Z\geq k(t)$ for $t\in I$,
there is a family of log-Sobolev inequalities with respect to
$P_{s,t}$:
$$P_{s,t}(f^2\log f^2)\leq 4\l(\int_s^t\exp\l(-2\int_r^tk(u)\,\vd u\r)\,\vd r\r)P_{s,t}|\nabla^t f|_t^2+P_{s,t}f^2\log P_{s,t}f^2,\ \ f\in C_b^1(M),\ (s,t)\in \Lambda.$$
Hypercontractivity of $P_{s,t}$ in $L^p$ space, related to the unique
evolution system of measures, is then obtained as a consequence of the
log-Sobolev inequalities.

In Section \ref{Sect5} we then prove that supercontractivity of the
evolution operators $P_{s,t}$ is equivalent to the validity of the
following family of super-log-Sobolev inequalities
$$\int_M f^2\log \frac{|f|}{\|f\|_{2,s}}\,\vd \mu_s\leq r\big\||\nabla^s f|_s\big\|_{2,s}^2+\beta_s(r)\|f\|_{2,s}^2,\quad r >0,$$
for every $s\in I$, $f\in H^1(M,\mu_s)$ and some positive decreasing
function $\beta_s$.  Note that the function $\beta_s$ may depend on
the current time $s$ which generalizes the notion of super-log-Sobolev
inequalities for non-autonomous systems on $\R^d$ in \cite{LLA13}.
Moreover, combining the super-log-Sobolev inequalities and
dimension-free Harnack inequalities, we prove that the exponential
integrability of radial function with respect to $(\mu_t)_{t\in I}$ or
$(P_{s,t})_{(s,t)\in \Lambda}$ is equivalent to supercontractivity or
ultraboundedness of the corresponding semigroup.

The paper is organized as follows. In Section~\ref{Sect2} we first
give sufficient conditions for existence and uniqueness of evolution
systems of measures.  Then in Section~\ref{Sect3}, by means of Bismut
type formulas, gradient estimates in $L^{p}(M,\mu_s)$ are established
for $p\in (1,+\infty]$, which are used in
Sections~\ref{Sect4}-\ref{Sect5} to study hypercontractivity,
supercontractivity and ultraboundedness for the corresponding
semigroup.

\section{Diffusion processes and evolution system of
  measures}\label{Sect2}
\subsection{Non-explosion}
Recall that $\rho_t(x)$ denotes the distance function $\rho_t(o,x)$
with respect to a fixed reference point $o\in M$.  A sufficient
condition for non-explosion of $L_t$-diffusions can be given as
follows.
\begin{theorem}\label{nonep}
  Suppose that Hypothesis {\bf(H1)} holds. Then $L_t$-diffusion
  process $X_t$ is non-explosive before time~$T$.
\end{theorem}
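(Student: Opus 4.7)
The plan is to prove non-explosion by the classical Lyapunov / stopping-time argument, adapted to the time-dependent geometric setting. For $(s,x) \in I\times M$, start the diffusion $X_t$ at time $s$ from $x$ and define the sequence of stopping times
$$\sigma_n := \inf\{t \geq s : \rho_t(X_t) \geq n\} \wedge T_n,$$
where $T_n \nearrow T$ is any sequence exhausting $I$ from above. Non-explosion before $T$ amounts to $\sigma_n \to T$ almost surely as $n \to \infty$. The strategy is to apply Itô's formula to the time-dependent Lyapunov function $\varphi \circ \rho_t$ evaluated along $X_t$, use Hypothesis \textbf{(H1)} to bound the drift uniformly, then take expectations up to $\sigma_n$ and let $n \to \infty$.

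The technical obstacle is that $\rho_t$ is not smooth on $\Cut_t$ and is itself time-dependent. I would handle this exactly as in the time-homogeneous case, via Kendall's Itô formula for the radial process on an evolving manifold (this is available in the cited works \cite{ACT,KP11}): writing $X_t = \pi u_t$ with $u_t$ the horizontal lift solving \eqref{SDE-u}, one obtains, for $t \in [s,\sigma_n]$,
$$\varphi(\rho_t(X_t)) \leq \varphi(\rho_s(x)) + M_t + \int_s^t (L_r+\partial_r)(\varphi\circ \rho_r)(X_r)\,\vd r - L_t^{\Cut},$$
where $M_t$ is a local martingale and $L_t^{\Cut}\geq 0$ is the local-time contribution on the cut locus, known to be non-negative by Kendall's technique. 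Up to the bounded stopping time $\sigma_n$ the local martingale $M$ is a true martingale, so plugging in \textbf{(H1)} yields
$$\E\bigl[\varphi(\rho_{\sigma_n \wedge t}(X_{\sigma_n \wedge t}))\bigr] \leq \varphi(\rho_s(x)) + \int_s^t m(r)\,\vd r.$$

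Because $\varphi$ is increasing, on $\{\sigma_n \leq t\}$ one has $\varphi(\rho_{\sigma_n}(X_{\sigma_n})) \geq \varphi(n)$ (for $n$ large enough that $T_n > t$), and consequently
$$\varphi(n)\,\P(\sigma_n \leq t) \leq \varphi(\rho_s(x)) + \int_s^t m(r)\,\vd r.$$
Since $\varphi(n) \to \infty$ and the right-hand side is finite and independent of $n$, we conclude $\P(\sigma_n \leq t) \to 0$ as $n \to \infty$ for every $t < T$, which gives non-explosion before $T$. The only delicate point in making this rigorous is the Kendall-type Itô formula in the time-varying metric, but this is standard machinery from \cite{ACT,KP11}; the rest is a direct Lyapunov estimate.
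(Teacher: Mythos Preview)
Your proof is correct and follows essentially the same Lyapunov/stopping-time argument as the paper: apply the Kendall-type It\^o formula for the time-dependent radial process (the paper cites \cite[Theorem~2]{KP11} for this), bound the drift via \textbf{(H1)}, and deduce $\varphi(n)\,\P(\sigma_n\le t)$ stays bounded. The only cosmetic difference is that the paper first fixes $t^*\in(s,T]$ and replaces $m$ by the constant $c=\sup_{[s,t^*]}m$, whereas you keep the integral $\int_s^t m(r)\,\vd r$ (which you should bound by $\int_s^t m^+(r)\,\vd r$ to be safe, since the upper limit of integration is random); otherwise the arguments coincide.
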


\begin{proof}
  Without loss of generality, we suppose that the $L_t$-process $X_t$
  starts from $x$ at time $s$.  For fixed $t^*\in (s,T]$, there exists
  $c:=\sup_{t\in [s,t^*]}m(t)>0$ such that
$$\l(L_t+\partial_t\r)\varphi\circ \rho_t(x)\leq c,\quad (t,x)\in [s,t^*]\times M.$$
Then, by the It\^{o} formula for the radial part of $X_t$ (see
\cite[Theorem 2]{KP11}), we obtain
\begin{align*}
  \vd \varphi\circ\rho_t(X_t)&\leq
                               \sqrt2\l<u_t^{-1}\nabla^{t}\varphi\circ\rho_t(X_t),\vd
                               B_t\r>+\l(L_t+\partial_t\r)\varphi\circ \rho_t(X_t)\,\vd t\\
                             &\leq\sqrt2\l<u_t^{-1}\nabla^{t}\varphi\circ\rho_t(X_t),\vd
                               B_t\r>+c\,\vd t
\end{align*}
up to the lifetime $\zeta\wedge t^*$ where
$\zeta:=\lim_{n\rightarrow \infty} \zeta_n$ with
$$\zeta_n:=\inf\{t\in (s,T)\colon \rho_t(x,X_t)\geq n\}.$$
In particular, if $X_s=x\in M$, then
$$\varphi(n)\P^x\{\zeta_n\leq t\}\leq \E^x[\varphi\circ \rho_{t\wedge\zeta_n}(X_{t\wedge\zeta_n})]\leq \varphi(\rho_s(x))+ct,\quad  t\in [s,t^*].$$
According to Hypothesis {\bf(H1)}, $\varphi$ is an increasing function
such that $\varphi(r)\rightarrow +\infty$ as $r\rightarrow\infty$.
Thus, there exists $m\in \N^+$ such that $\varphi(n)>0$ for
all $n\geq m$ and
$$\P^x\{\zeta\leq t\}\leq \lim_{n\rightarrow\infty}\P^x\{\zeta_n\leq t\}
\leq\lim_{n\rightarrow\infty}\frac{\varphi(\rho_s(x))+ct}{\varphi(n)}=0,\quad
t\in[s,t^*].$$
Therefore we have $\P\{\zeta\geq t^*\}=1$. Since $t^*$ is
arbitrary, we obtain
$$\P\{\zeta\geq T\}=1$$
which completes the proof.
\end{proof}

From Theorem \ref{nonep} we get the following corollary which has been
proved in \cite{KP11} in the case of a Lyapunov condition with
constant coefficients.
\begin{corollary}\label{cor-2-1}
  Let $\psi \in C({\R}^+)$ and $h\in C(I)$ be non-negative such that
  for any $t\in I$,
  \begin{align}\label{LR}
    (L_t+\partial_t)\rho_t(x)\leq h(t)\psi(\rho_t(x))
  \end{align}
  holds outside $\mathrm{Cut}_{t}(o)$, the cut-locus of $o$ associated
  with the metric $g_t$. If
  \begin{align}\label{1e1}\int_1^{\infty}\vd
    t\int_1^{t}\exp{\l(-\int_r^t\psi(s)\,\vd s\r)}\vd
    r=\infty,\end{align}
  then the $L_t$-diffusion process
  is non-explosive.
\end{corollary}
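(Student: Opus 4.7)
The plan is to reduce Corollary~\ref{cor-2-1} to an application of Theorem~\ref{nonep} by exhibiting an explicit Lyapunov function $\varphi \in C^2(\R^+)$ verifying Hypothesis~{\bf(H1)}. Motivated by the one-dimensional comparison ODE $\varphi'' + \psi\varphi' = 1$, the natural candidate is
$$\varphi(r) := \int_0^r \int_0^u \exp\!\l(-\int_v^u \psi(w)\,\vd w\r)\vd v\,\vd u, \quad r \geq 0.$$
By construction $\varphi \in C^2(\R^+)$ with $\varphi'(r) = \int_0^r \exp(-\int_v^r \psi(s)\,\vd s)\,\vd v \geq 0$, so $\varphi$ is non-decreasing, while the integrability hypothesis~\eqref{1e1} is precisely the statement that $\varphi(r) \to \infty$ as $r \to \infty$.

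Differentiating, $\varphi'' = 1 - \psi\varphi'$, so by the radial bound~\eqref{LR}, outside the cut locus,
$$(L_t+\partial_t)(\varphi \circ \rho_t)(x) \leq \varphi''(\rho_t(x)) + h(t)\psi(\rho_t(x))\varphi'(\rho_t(x)) = 1 + (h(t)-1)\psi(\rho_t(x))\varphi'(\rho_t(x)).$$
Since $h$ is only continuous and not bounded a priori on $I$, I would fix an arbitrary subinterval $[s,t^*]\subset I$, set $H := \sup_{u \in [s,t^*]} h(u) < \infty$, and replace $\varphi$ by the $H$-rescaled Lyapunov function $\varphi_H$ built analogously but with $\psi$ replaced by $H\psi$ inside the exponential. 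The ODE $\varphi_H'' + H\psi\varphi_H' = 1$ then yields $(L_t+\partial_t)(\varphi_H \circ \rho_t)(x) \leq 1$ on $[s,t^*]\times M\setminus \Cut$, and the stopping-time argument from the proof of Theorem~\ref{nonep} applies verbatim: for $\zeta_n := \inf\{t > s \colon \rho_t(x,X_t) \geq n\}$ and $t \in [s,t^*]$,
$$\varphi_H(n)\,\P^x\{\zeta_n \leq t\} \leq \E^x[\varphi_H(\rho_{t\wedge\zeta_n}(X_{t\wedge\zeta_n}))] \leq \varphi_H(\rho_s(x)) + (t-s),$$
so letting $n \to \infty$ and then $t^* \to T$ gives $\P^x\{\zeta \geq T\} = 1$.

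The main obstacle I expect is the verification that the integrability condition~\eqref{1e1} for $\psi$ entails the corresponding divergence with $H\psi$ in place of $\psi$, so that $\varphi_H(r) \to \infty$ for every $H > 0$. This is immediate when $\psi$ is uniformly bounded on $\R^+$, since then the exponential factor admits a uniform lower bound independent of $H$; in the unbounded case one invokes a Feller-type asymptotic analysis based on the fact that near $r = t$, $\int_r^t \psi(s)\,\vd s \approx \psi(t)(t-r)$, so that $\int_1^t \exp(-H\int_r^t \psi(s)\,\vd s)\,\vd r \sim 1/(H\psi(t))$, reducing the $H$-statement to~\eqref{1e1} itself.
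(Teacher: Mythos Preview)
Your approach is essentially identical to the paper's: both localize to an interval $[s,t^*]$, set $c=\sup_{[s,t^*]}h$, construct the Lyapunov function as the solution of $\varphi''+c\psi\varphi'=1$ (the paper integrates from $1$, you from $0$, which is immaterial), and invoke Theorem~\ref{nonep}.

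The obstacle you flag --- that the divergence \eqref{1e1} for $\psi$ must persist for $c\psi$ with $c>1$ --- is precisely the step the paper dismisses with ``it is easy to see.'' So you have identified a genuine lacuna that the paper itself does not fill. Your proposed resolution via Laplace-type asymptotics $\int_1^t\exp(-c\!\int_r^t\psi)\,\vd r\sim 1/(c\psi(t))$ is only heuristic: it requires regularity of $\psi$ (e.g.\ $\psi$ eventually monotone, or $\psi'/\psi^2\to0$) which is not part of the hypothesis $\psi\in C(\R^+)$. For merely continuous, possibly highly oscillatory $\psi$, the scale-invariance of the Feller-type condition is not obvious, and neither you nor the paper supplies a proof.
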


\begin{proof}
  Suppose that the process $X_t$ generated by $L_t$ starts from $x$ at
  time $s\in I$. For fixed $t^*\in (s,T]$, let
  $c=\sup_{t\in [s,t^*]}h(t)$ and
$$\varphi(s)=\int_1^s\vd t\int_1^t \exp\l(-c\int_r^t\psi(s)\,\vd s\r)\vd r.$$
It is easy to see from condition \eqref{1e1} that $\varphi$ is an
increasing function on $\R^+$ with $\varphi(r)\rightarrow \infty$ as
$r\rightarrow \infty$, satisfying
\begin{align*}
  (L_t+\partial_t)\varphi(\rho_t(x))\leq 1, \quad\ t\in [s,t^*].
\end{align*}
This completes the proof.
\end{proof}

\subsection{Evolution systems of measures}
For $t\in I$ consider the linear second order differential operator
$L_t$ given on a smooth function $f$ by
$$L_tf=(\Delta_t +Z_t)f.$$
As indicated, Hypothesis {\bf(H1)} guarantees the existence of a
unique Markov semigroup $P_{s,t}$ generated by~$L_t$.
Indeed, for fixed $t\in I$ and $f\in C_b(M)$, the function
$(s,x)\mapsto P_{s, t}f(x)$ is the unique bounded classical solution
in $C_b((-\infty, t]\times M)\cap C^{1,2}((-\infty,t]\times M)$ to the
backward Cauchy problem:
\begin{align}\label{heat-2}
  \begin{cases}
    \partial_su(\newdot, x)(s)=-L_su(s,\newdot)(x), & (s,x)\in (-\infty, t)\times M,\\
    u(t,x)=f(x),& x\in M.
  \end{cases}
\end{align}
According to the uniqueness of solutions to Eq.~\eqref{heat-2}, we
obtain
$$P_{s,r}\,P_{r,t}=P_{s,t},\quad s\leq r\leq t< T.$$
Moreover, for any $(s,t)\in\Lambda$, $x\in M$ and $f\in C^2(M)$ with
$\|L_tf\|_{\infty}^{\mathstrut}<\infty$, the forward Kolmogorov
equation reads as
$$\frac{\partial}{\partial t}P_{s,t}f(x)=P_{s,t}L_tf(x),$$
and for any $f\in \mathcal{B}_b(M)$, $(s,t)\in \Lambda$ and $x\in M$,
the backward Kolmogorov equation is given by
$$\frac{\partial }{\partial s}P_{s,t}f(x)=-L_sP_{s,t}f(x).$$

Based on Hypothesis {\bf(H2)} or {\bf(H3)}, one can prove existence
and uniqueness of an evolution system.

   \begin{theorem}\label{invariant-measure-th}
     Suppose that Hypothesis {\bf(H2)} holds, then there exists an
     evolution system of measures $(\mu_t)_{t\in I}$ for
     $(P_{s,t})_{(s,t)\in \Lambda}$ such that
     \begin{align}\label{ineq-e1}
       \sup_{s\in (-\infty,t]}\int_M (\varphi\circ \rho_s)(y)\,\mu_s(\vd y)\leq H(t).
     \end{align}
     Suppose that Hypothesis {\bf(H3)} holds, then there exists a
     unique evolution system and
     \begin{align}\label{ineq-1}
       \sup_{s\in (-\infty,t]}\int_M \rho_s(y)^2\,\mu_s(\vd y)< H_1(t).
     \end{align}
   \end{theorem}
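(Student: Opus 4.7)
The plan is to establish existence by a Krylov--Bogolyubov-type limiting construction based on Hypothesis {\bf(H2)}, and uniqueness by a coupling argument based on the curvature lower bound in {\bf(H3)}. For existence, fix $x\in M$ and, for $s\le t$, write $\nu_{s,t}:=P_{s,t}^{*}\delta_{x}$ for the law of $X_{t}^{(s,x)}$. Applying It\^{o}'s formula to the space-time Lyapunov function $\varphi\circ\rho_{t}$ (with the standard Kendall approximation at $\Cut_{t}$) and invoking {\bf(H2)}, I obtain the ODE comparison $\tfrac{d}{dt}\E[\varphi\circ\rho_{t}(X_{t}^{(s,x)})]\le a(t)-c(t)\E[\varphi\circ\rho_{t}(X_{t}^{(s,x)})]$, which by the integrating factor $\exp\bigl(\int_{\newdot}^{t}c(u)\,\vd u\bigr)$ yields
\[
\E[\varphi\circ\rho_{t}(X_{t}^{(s,x)})]\le e^{-\int_{s}^{t}c(u)\vd u}\varphi(\rho_{s}(x))+\int_{s}^{t}e^{-\int_{r}^{t}c(u)\vd u}a(r)\,\vd r.
\]
Since $\varphi$ is proper and $(M,g_{t})$ is complete, this bound (uniform in $s$) produces tightness of $\{\nu_{s_{n},t}\}_{n}$ for any sequence $s_{n}\to-\infty$.

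A diagonal extraction over a countable dense set $D\subset I$ next produces a single subsequence $s_{n}\to-\infty$ such that $\nu_{s_{n},t}\Rightarrow\mu_{t}$ weakly for every $t\in D$. For $s,t\in D$ and $\phi\in C_{b}(M)$, the identity \eqref{invariant-measure} follows from the chain
\[
\int_{M}\phi\,\vd\mu_{t}=\lim_{n}\int_{M}\phi\,\vd\nu_{s_{n},t}=\lim_{n}P_{s_{n},s}(P_{s,t}\phi)(x)=\lim_{n}\int_{M}P_{s,t}\phi\,\vd\nu_{s_{n},s}=\int_{M}P_{s,t}\phi\,\vd\mu_{s},
\]
where the last step uses $P_{s,t}\phi\in C_{b}(M)$ by standard parabolic regularity. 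I then extend $(\mu_{t})$ to all of $I$ via $\mu_{t}:=P_{r,t}^{*}\mu_{r}$ for $r\in D$, $r\le t$, which is consistent by the identity just established. Bound \eqref{ineq-e1} is obtained by Fatou's lemma applied to $\int\varphi\circ\rho_{s}\,\vd\nu_{s_{n},s}$ in the above Gronwall estimate as $s_{n}\to-\infty$.

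For uniqueness under {\bf(H3)}, the same construction applied to $\rho_{t}^{2}$ in place of $\varphi\circ\rho_{t}$ delivers the second-moment bound \eqref{ineq-1}: combining $\mathcal{R}_{t}^{Z}\ge k(t)$, Laplace comparison on the small ball $\{\rho_{t}\le\epsilon\}$ (where $|\Ric_{t}|$ is controlled by $k_{\epsilon}(t)$ and the drift by $|Z_{t}|_{t}(o)$), and $|\nabla^{t}\rho_{t}|_{t}\equiv 1$ produces an estimate of the form $(L_{t}+\partial_{t})\rho_{t}^{2}\le B_{1}(t)-A_{1}(t)\rho_{t}^{2}$ outside $\Cut_{t}$, whose Gronwall primitive is precisely $H_{1}(t)$. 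Uniqueness itself follows by coupling: given two evolution systems $(\mu_{t}),(\nu_{t})$, I would build a parallel-transport coupling $(X_{t},Y_{t})$ of $L_{t}$-diffusions with $X_{s}\sim\mu_{s}$, $Y_{s}\sim\nu_{s}$; the curvature bound $\mathcal{R}_{t}^{Z}\ge k(t)$ yields an exponentially contracting radial estimate $\E[\rho_{t}(X_{t},Y_{t})]\le e^{-\int_{s}^{t}k(u)\vd u}\E[\rho_{s}(X_{s},Y_{s})]$, and together with the second-moment bound and $\int_{-\infty}^{t}k(u)\,\vd u=+\infty$ from Remark~\ref{rem2}(a), this allows one to send $s\to-\infty$ in
\[
|\mu_{t}(\phi)-\nu_{t}(\phi)|=|\E[P_{s,t}\phi(X_{s})-P_{s,t}\phi(Y_{s})]|\le\|\phi\|_{\mathrm{Lip}}\,\E[\rho_{t}(X_{t},Y_{t})]
\]
for Lipschitz $\phi$, forcing $\mu_{t}=\nu_{t}$. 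The main obstacle I expect is the contraction step on the evolving manifold: the radial It\^{o} calculation for two diffusions coupled through $t$-dependent orthonormal frames must be carried out carefully, and one has to check that the Ricci, metric-evolution and drift-Jacobian contributions combine precisely into $\mathcal{R}_{t}^{Z}=\Ric_{t}-h_{t}-\nabla^{t}Z_{t}$ with the correct sign so that $k(t)$ plays the role of a contraction rate, with proper handling of $\Cut_{t}$ throughout.
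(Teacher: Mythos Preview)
Your proposal is correct and tracks the paper closely: Lyapunov estimate $\Rightarrow$ tightness $\Rightarrow$ existence, then parallel coupling under $\mathcal{R}_t^Z\ge k(t)$ $\Rightarrow$ uniqueness. Two differences are worth flagging. For existence, the paper builds Ces\`aro averages $\mu_{s,t}(A)=\frac{1}{t-s}\int_s^t P_{r,t}(o,A)\,\vd r$ and passes to weak limits at integer terminal times before extending via $\mu_s:=P_{n,s}^*\mu_n$; your direct limits of $P_{s_n,t}^*\delta_x$ along a diagonal subsequence over a dense set are equally valid (the Ces\`aro step is not doing real work once the pointwise Lyapunov bound is available), but you should take the fixed starting point to be the reference point $x=o$, so that $\varphi(\rho_{s_n}(x))=\varphi(0)=0$ kills the homogeneous term in your Gronwall estimate---otherwise $\rho_{s_n}(x)$ is uncontrolled as $s_n\to-\infty$ under an evolving metric and you cannot recover the sharp constant $H(t)$. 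For uniqueness, the paper couples the diffusion started at $o$ with one started at a generic $y$, integrates in $\mu_s(\vd y)$, and shows $P_{s,t}f(o)\to\mu_t(f)$ as $s\to-\infty$ for \emph{any} evolution system (so two systems must share the same limit); your measure-to-measure coupling $(X_s\sim\mu_s,\ Y_s\sim\nu_s)$ reaches the same conclusion but requires the second-moment bound on both systems simultaneously rather than one at a time. In either formulation there is a small implicit step---that an \emph{arbitrary} evolution system under {\bf(H3)} inherits the bound $\sup_{s\le t}\mu_s(\rho_s^2)<\infty$---which neither you nor the paper spells out in full.
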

   \begin{proof}
     (a) \ We first show existence.  Given $t\in I$, a family of
     measures can be constructed as follows (see e.g. \cite{BRW01b}
     for details). For $A\in \mathcal{B}(M)$ and $(s,t)\in \Lambda$,
     let
$$\mu_{s,t}(A):=\frac1{t-s}\int_s^tP_{r,t}(o,A)\,\vd r.$$
We claim that under Hypothesis {\bf(H2)}, the family of measures
$(\mu_{s,t})_{s\in (-\infty,t]}$ is compact.  Suppose that $X_t$
starts from $o$ at time $s$.  Under Hypothesis {\bf(H2)}, applying the
It\^{o} formula to the radial process $\rho_t(X_t)$, we get
\begin{align*}
  \vd \varphi(\rho_t(X_t))&\leq \varphi'(\rho_t(X_t))\l<\nabla^t\rho_t(X_t), u_t\vd B_t\r>_t+(L_t+\partial_t)\varphi\circ \rho_t(X_t)\,\vd t\\
                          &\leq \varphi'(\rho_t(X_t))\l<\nabla^t\rho_t(X_t), u_t\vd B_t\r>_t+\big(a(t)-c(t)\varphi\circ \rho_t(X_t)\big)\,\vd t.
\end{align*}
It follows that
\begin{align*}
  \E[\varphi(\rho_t(X_{t\wedge\zeta_n}))]-\varphi(0)\leq \E\int_s^{t\wedge\zeta_n}\big(a(r)-c(r)[\varphi\circ \rho_r(X_r)]
  \big)\,\vd r,
\end{align*}
i.e.,
\begin{align*}
  \E\l[\exp{\l(\int_s^{t\wedge \zeta_n}c(r)\,\vd r\r)}\varphi\circ \rho_{t\wedge \zeta_n}(X_{t\wedge \zeta_n})\r]\leq
  \varphi(0)+\E\left[\int_s^{t\wedge\zeta_n}\exp{\l(\int_s^rc(u)\,\vd u\r)}a(r)\,\vd r\right].
\end{align*}
Using the condition $\varphi(0)=0$ and letting $n\rightarrow \infty$,
we arrive at
\begin{align*}
  \E\left[\varphi\circ \rho_{t}(X_{t})\right]&\leq
                                               \l[\int_s^{t}\exp{\l(\int_s^rc(u)\,\vd u\r)}a(r)\,\vd r\r]\exp{\l(-\int_s^{t}c(r)\,\vd r\r)}\\
                                             &\leq \int_s^{t}\exp{\l(-\int_r^tc(u)\,\vd u\r)}a(r)\,\vd r\\
                                             &\leq H(t).
\end{align*}
Therefore, according to the monotonicity of $H$, we have
\begin{align}\label{P-bound}
  \sup_{s\in (-\infty,t]}(P_{s,t}\varphi\circ \rho_t)(o)\leq H(t),
\end{align}
from which it follows that
\begin{align*}
  \mu_{s,t}(\varphi\circ \rho_t)&=\frac1{t-s}\int_s^t(P_{r,t}\varphi\circ \rho_t)(o)\,\vd r\leq H(t).
\end{align*}
In addition, since $\varphi$ is a compact and increasing function such
that $\varphi(r)\rightarrow +\infty$ as $r\rightarrow +\infty$, we
know that $(\mu_{s,t})_{s\in (-\infty,t]}$ is a family of compact
measures, i.e., for each $n\in\Z$, there exists a sequence
$(t_{n_k})$, $t_{n_k}\rightarrow +\infty$ as $k\rightarrow +\infty$
such that
$$\mu_{t_{n_k},n}\rightharpoonup^* \mu_n.$$
Define $\mu_s:=P_{n,s}^*\mu_n$. It is easy to check that the family
$\mu_s$ satisfies Eq.~\eqref{invariant-measure}, i.e., for
$\phi\in \mathscr{B}_b(M)$,
$$\mu_s(P_{s,t}\phi)=P_{n,s}^*\mu_n(P_{s,t}\phi)=\mu_n(P_{n,t}\phi)=\mu_t(\phi).$$
By this and the bound \eqref{P-bound}, we get the existence of an
evolution system $(\mu_{s})_{s\in I}$. Moreover, we have the estimate
\begin{align*}
  \mu_s(\varphi\circ \rho_s)=\mu_n(P_{n,s}\varphi\circ \rho_s)\leq \lim_{t_{n_k}\rightarrow \infty} \frac1{n-t_{n_k}}\int_{t_{n_k}}^n\sup_{r\in (-\infty,s]} (P_{r,s}\varphi\circ \rho_s)(o)\,\vd r\leq H(s)
\end{align*}
which completes the proof of Eq.~\eqref{ineq-e1}.

(b) \ If Hypothesis {\bf(H3)} holds, we claim that there exists a
unique evolution system of probability measures $(\mu_t)_{t\in I}$
such that
$$\sup_{s\in(-\infty,t]}\mu_s(\rho_s^2)<H_1(t).$$
First recall the formula (see \cite[Lemma 5 and Remark 6]{MT})
\begin{equation}\label{par-rho}
  \partial_t\rho_t(x)=\frac12\int\partial_tg_t(\dot{r}(s),\dot{r}(s))\,\vd s
\end{equation}
where $r\colon[0,\rho_t(x)]\rightarrow M$ is a $g_t$-geodesic
connecting $o$ and $x$. By this formula and the index lemma, we have
\begin{align}\label{L-rho}
  (&L_t+\partial_t)\rho_t=(\Delta_t+Z_t+\partial_t)\rho_t\nonumber\\
   &\leq\frac{(d-1)G'(\rho_t)}{G(\rho_t)}+\int_0^{\rho_t}\frac12\partial_tg_t(\dot{r}(s),\dot{r}(s))\,\vd s+
     \int_0^{\rho_t}(\nabla^t Z_t)(\dot{r}(s),\dot{r}(s))\,\vd s+\l<Z_t, \dot{r}(0)\r>_t(o)
\end{align}
where $G$ is the solution to the equation
$$\left\{
\begin{aligned}
  G''(s)&=\frac{-\Ric_t(\dot{r}(s),\dot{r}(s))}{d-1}\,G(s),\\
  G(0)&=0,\ G'(0)=1.
\end{aligned}
\right.$$ Under Hypothesis {\bf(H3)}, by \cite[Lemma 9]{KP11}, we have
\begin{align*}
  (L_t+\partial_t)\rho_t&\leq \frac{(d-1)G'(\rho_t)}{G(\rho_t)}-k(t)\rho_t+\int_0^{\rho_t}\Ric_t(\dot{r}(s),\dot{r}(s))\,\vd s+\l<Z_t, \dot{r}(0)\r>_t(o)\\
                        &\leq \frac{(d-1)G'(\rho_t)}{G(\rho_t)}-k(t)\rho_t-\int_0^{\rho_t}\frac{(d-1)G''(s)}{G(s)}\,\vd s+|Z_t|_t(o)\\
                        &\leq F_t(\rho_t)-k(t)\rho_t+|Z_t|_t(o)
\end{align*}
where
$F_t(s)=\sqrt{k_{\epsilon}(t)(d-1)}\coth\left(\sqrt{k_{\epsilon}(t)/(d-1)}(s\wedge
  \epsilon)\right)+k_{\epsilon}(t)(s\wedge \epsilon)$ and
 $$k_{\epsilon}(t):=\sup\{|\Ric_t|: \rho_t(x)\leq \epsilon\}.$$
 It is easy to see that $F_t(s)$ is non-increasing in $s$ and
 $\lim_{r\rightarrow 0}r F_t(r)<\infty$.  Hence, by means of the
 positive function $\ell$ in Hypothesis {\bf(H3)}, we obtain
 \begin{align*}
   (L_t+\partial_t)\rho_t^2&=2\rho_t(L_t+\partial_t)\rho_t+2\\
                           &\leq 2\rho_t (F_t(\rho_t)-k(t)\rho_t+|Z_t|_t(o))+2\\
                           &\leq 2d+2\l\{k_{\epsilon}(t)\epsilon+{(d-1)}{\epsilon}^{-1}+\sqrt{(d-1)k_{\epsilon}(t)}+|Z_t|_t(o)\r\}\rho_t-2k(t) \rho_t^2\\
                           &\leq 2d+\l\{3\l(k_{\epsilon}(t)\epsilon+{(d-1)}{\epsilon}^{-1}\r)+2|Z_t|_t(o)\r\}\rho_t-2k(t) \rho_t^2\\
                           &\leq 2d+\frac{\l\{3k_{\epsilon}(t)\epsilon+3(d-1)\epsilon^{-1}+2|Z_t|_t(o)\r\}^2}{4\ell(t)}-(2k(t)-\ell(t))\rho_t^2.
 \end{align*}
 By a similar argument as in part (a), we obtain an evolution system
 of measures such that
$$\sup_{t\in (-\infty, s]}\mu_t(\rho_t^2)\leq H_1(s).$$

We now use a coupling method to prove uniqueness of the evolution
system. Let $(X_t, Y_t)$ be a parallel coupling starting from $(x,y)$
at time $s$.  Then, by \cite{Cheng15} or \cite{Ku}, we know that if
$\mathcal{R}_t^{Z}\geq k(t)$, $t\in I$, then
$$\E^{(s,(x,y))}[\rho_t(X_t,Y_t)]\leq \exp\l(-\int_s^tk(r)\,\vd r\r)\rho_s(x,y).$$
Let $(\mu_t)_{t\in I}$ be an evolution system of measures. Then, we
have the estimate:
\begin{align}\label{esti-BV}
  \l|P_{s,t}f(o)-\mu_t(f)\r|&=\l|\int (P_{s,t}f(o)-P_{s,t}f(y))\,\mu_s(\vd y)\r|\notag\\
                            &=\l|\int \E^{(s,(o,y))}\l[\frac{f(X_t)-f(Y_t)}{\rho_t(X_t,Y_t)}\rho_t(X_t,Y_t)\r]\mu_s(\vd y)\r|\notag\\
                            &\leq \||\nabla^t f|_t\|_{\infty}^{\mathstrut}\int \E^{(s,(o,y))}\l[\rho_t(X_t,Y_t)\r]\,\mu_s(\vd y)\notag\\
                            &\leq \exp\l(-\int_s^tk(r)\,\vd r\r)\||\nabla^t f|_t\|_{\infty}^{\mathstrut}\,\mu_s(\rho_s)\notag\\
                            &\leq \exp\l(-\int_s^tk(r)\,\vd r\r)\||\nabla^t f|_t\|_{\infty}^{\mathstrut}\big(\mu_s(\rho_s^2)\big)^{1/2}.
\end{align}
In addition, from Eq. \eqref{k-inter-esti}, we know that
$$\exp\l(-\int_{-\infty}^tk(r)\,\vd r\r)=0\ \  \mbox{ and}\ \ \sup_{s\in (-\infty, t]}\mu_s(\rho_s^2)<\infty.$$ Now letting
$s\rightarrow -\infty$, we conclude that
 $$\lim _{s\rightarrow -\infty}\left|P_{s,t}f(o)-\mu_t(f)\right|=0.$$
 If there exists another evolution system of probability measures
 $(\nu_t)_{t\in I}$, then $\nu_t(f)$ is also the limit of $P_{s,t}f(o)$ as
 $s\rightarrow -\infty$, and hence $\nu_t=\mu_t$.
\end{proof}

Directly from Eq. \eqref{esti-BV} we have the following asymptotic
results.

\begin{corollary}\label{cor1}
  Suppose that Hypothesis {\bf(H3)} holds. Then we have the following
  convergence result: for any $f\in C^1(M)$ being constant outside a
  compact set, there exists a function $c$ in $C(I)$ such that
$$\|P_{s,t}f-\mu_t(f)\|_{2,s}\leq c(t)\exp\l(-\int_s^tk(r)\,\vd r\r)\||\nabla^tf|_t\|_{\infty}^{\mathstrut},\quad (s,t)\in \Lambda.$$
\end{corollary}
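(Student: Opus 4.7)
The plan is to lift the pointwise bound \eqref{esti-BV} to an $L^2(\mu_s)$-estimate by letting the $L_t$-diffusion start from an arbitrary point $x$ instead of the fixed reference point $o$. The parallel coupling argument used in the proof of Theorem \ref{invariant-measure-th} is insensitive to the starting point, so for any $x\in M$ I would first obtain, using $\mu_t(f)=\int_M P_{s,t}f(y)\,\mu_s(\vd y)$ together with the contraction $\E^{(s,(x,y))}[\rho_t(X_t,Y_t)]\leq \exp(-\int_s^t k(r)\,\vd r)\rho_s(x,y)$ from \cite{Cheng15,Ku}, the pointwise estimate
$$|P_{s,t}f(x)-\mu_t(f)|\leq \||\nabla^t f|_t\|_\infty^{\mathstrut}\exp\l(-\int_s^tk(r)\,\vd r\r)\int_M \rho_s(x,y)\,\mu_s(\vd y),\quad x\in M.$$

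Passing to $L^2(\mu_s)$-norms is then routine: Cauchy--Schwarz (using that $\mu_s$ is a probability measure) yields $\bigl(\int \rho_s(x,y)\,\mu_s(\vd y)\bigr)^2\leq \int \rho_s(x,y)^2\,\mu_s(\vd y)$, and the triangle inequality $\rho_s(x,y)^2\leq 2\rho_s(x)^2+2\rho_s(y)^2$ combined with Fubini gives $\int_M\bigl(\int_M \rho_s(x,y)\,\mu_s(\vd y)\bigr)^2\mu_s(\vd x)\leq 4\mu_s(\rho_s^2)$. The second-moment bound \eqref{ineq-1} from Theorem \ref{invariant-measure-th} then majorises this by $4H_1(t)$ for every $s\leq t$. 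Taking square roots gives the claim with $c(t):=2\sqrt{H_1(t)}$; continuity of $c$ on $I$ is immediate from the definition of $H_1$ in \eqref{eq4} since its integrand depends continuously on $t$ and the integral is finite under {\bf(H3)}.

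The hypothesis that $f$ is constant outside a compact set enters only to ensure $\||\nabla^t f|_t\|_\infty<\infty$ for every $t\in I$, so that the right-hand side of the pointwise bound is meaningful and finite. I do not expect a genuine obstacle: the hard work, namely the coupling contraction and the uniform second-moment estimate for $(\mu_s)_{s\in I}$, is already contained in Theorem~\ref{invariant-measure-th}, and the only additional point is the $L^2$-level computation of the Wasserstein-type quantity $\int_M \rho_s(x,y)\,\mu_s(\vd y)$, which is handled by the split $\rho_s(x,y)^2\leq 2\rho_s(x)^2+2\rho_s(y)^2$ together with Fubini.
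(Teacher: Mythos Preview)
Your proposal is correct and follows essentially the same approach as the paper: both combine the parallel-coupling contraction $\E^{(s,(x,y))}[\rho_t(X_t,Y_t)]\leq \e^{-\int_s^tk}\rho_s(x,y)$ with the invariance $\mu_t(f)=\mu_s(P_{s,t}f)$ and the second-moment bound \eqref{ineq-1}, arriving at the same constant $c(t)=2\sqrt{H_1(t)}$. The only cosmetic difference is that the paper routes through the reference point $o$ via the splitting $P_{s,t}f(x)-\mu_t(f)=[P_{s,t}f(x)-P_{s,t}f(o)]+[P_{s,t}f(o)-\mu_t(f)]$ (the second bracket being \eqref{esti-BV}), whereas you integrate the coupling estimate over $y\sim\mu_s$ directly and use $\rho_s(x,y)^2\leq 2\rho_s(x)^2+2\rho_s(y)^2$; both computations yield the same $L^2$-bound.
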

\begin{proof}
  Let $(X_t,Y_t)$ be parallel coupling process associated to $L_t$.
  For any $f\in C^1(M)$ being constant outside a compact set, we have
  \begin{align}\label{ineq-2}
    |P_{s,t}f(x)-\mu_t(f)|&=\left|P_{s,t}f(x)-P_{s,t}f(o)+P_{s,t}f(o)-\mu_t(f)\right|\notag\\
                          &\leq \l|\E^{(s,(x,o))}\l[f(X_t)-f(Y_t)\r]\r|+\e^{-\int_s^tk(r)\,\vd r}\||\nabla^t f|_t\|_{\infty}^{\mathstrut}\l(\mu_s(\rho_s^2)\r)^{1/2}\notag\\
                          &\leq \l|\E^{(s,(x,o))}\l[\frac{f(X_t)-f(Y_t)}{\rho_t(X_t,Y_t)}\rho_t(X_t,Y_t)\r]\r|+\e^{-\int_s^tk(r)\,\vd r}\||\nabla^t f|_t\|_{\infty}^{\mathstrut}\l(\mu_s(\rho_s^2)\r)^{1/2}\notag\\
                          & \leq \||\nabla^tf|_t\|_{\infty}^{\mathstrut}\E^{(s,(x,o))}\l[\rho_t(X_t,Y_t)\r]+\e^{-\int_s^tk(r)\,\vd r}\||\nabla^t f|_t\|_{\infty}^{\mathstrut}\l(\mu_s(\rho_s^2)\r)^{1/2}\notag\\
                          &\leq \e^{-\int_s^tk(r)\,\vd r}\||\nabla^t f|_t\|_{\infty}^{\mathstrut}\l(\rho_s(x)+\l(\mu_s(\rho_s^2)\r)^{1/2}\r)
  \end{align}
  which implies that
$$\|P_{s,t}f-\mu_t (f)\|_{2,s}\leq 2\exp\l(-\int_s^tk(r)\,\vd r\r)\||\nabla^t f|_t\|_{\infty}^{\mathstrut}\l(\mu_s(\rho_s^2)\r)^{1/2}.$$
Now using Theorem \ref{invariant-measure-th} and
$$\sup_{s\in (-\infty, t]}\mu_s(\rho_s^2)<\infty,$$ 
we obtain the result directly.
\end{proof}

\begin{corollary}\label{cor2} Suppose that Hypothesis {\bf(H3)} holds and $\sup_{s\in (-\infty,t]}\rho_s(x)<\infty$ for any $x\in M$ and $t\in I$. Then we have the following convergence
  result: for any $f\in C_b^1(M)$, there exists a function $C$ in
  $C(I)$ such that
$$|P_{s,t}f-\mu_t(f)|\leq C(t)\exp\l(-\int_s^tk(r)\,\vd r\r)\||\nabla^tf|_t\|_{\infty}^{\mathstrut},\quad (s,t)\in \Lambda.$$
\end{corollary}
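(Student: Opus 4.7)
The plan is to mimic the proof of Corollary~\ref{cor1} almost verbatim, keeping the estimate pointwise in $x$ rather than integrating it in $L^{2}(M,\mu_{s})$. All the analytic ingredients are already present: the parallel coupling $(X_{t},Y_{t})$ associated to $L_{t}$ that contracts at rate $\exp(-\int_{s}^{t}k(r)\,\vd r)$ under Hypothesis~{\bf(H3)}, the bound $\sup_{s\le t}\mu_{s}(\rho_{s}^{2})<\infty$ from Theorem~\ref{invariant-measure-th}, and the elementary gradient inequality $|f(X_{t})-f(Y_{t})|\le \||\nabla^{t}f|_{t}\|_{\infty}^{\mathstrut}\,\rho_{t}(X_{t},Y_{t})$, which holds for every $f\in C_{b}^{1}(M)$. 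The role of the new hypothesis $\sup_{s\in(-\infty,t]}\rho_{s}(x)<\infty$ is precisely to replace the $L^{2}(\mu_{s})$ average of $\rho_{s}$, used in Corollary~\ref{cor1}, by a pointwise finite quantity.

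First I would fix $x\in M$ and $(s,t)\in\Lambda$ and split
\[
|P_{s,t}f(x)-\mu_{t}(f)|\le |P_{s,t}f(x)-P_{s,t}f(o)|+|P_{s,t}f(o)-\mu_{t}(f)|.
\]
For the first summand, running a parallel coupling $(X_{t},Y_{t})$ from $(x,o)$ and applying the gradient bound together with the contraction estimate from \cite{Cheng15,Ku} gives
\[
|P_{s,t}f(x)-P_{s,t}f(o)|\le \||\nabla^{t}f|_{t}\|_{\infty}^{\mathstrut}\exp\!\Bigl(-\int_{s}^{t}k(r)\,\vd r\Bigr)\rho_{s}(x).
\]
For the second summand, the very same computation carried out in \eqref{esti-BV} (which is valid as soon as $\||\nabla^{t}f|_{t}\|_{\infty}^{\mathstrut}<\infty$, and therefore for $f\in C_{b}^{1}(M)$) yields
\[
|P_{s,t}f(o)-\mu_{t}(f)|\le \||\nabla^{t}f|_{t}\|_{\infty}^{\mathstrut}\exp\!\Bigl(-\int_{s}^{t}k(r)\,\vd r\Bigr)\bigl(\mu_{s}(\rho_{s}^{2})\bigr)^{1/2}.
\]
Adding the two bounds reproduces precisely the inequality appearing in the last line of \eqref{ineq-2}.

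To conclude, I would set
\[
C(t):=\sup_{s\in(-\infty,t]}\rho_{s}(x)+\sup_{s\in(-\infty,t]}\bigl(\mu_{s}(\rho_{s}^{2})\bigr)^{1/2}.
\]
The first sup is finite by the newly imposed assumption; the second is finite (and in fact bounded by $H_{1}(t)^{1/2}$) by Theorem~\ref{invariant-measure-th}, and depends continuously on~$t$ through the continuous function $H_{1}$. There is no real obstacle here: the whole argument is a pointwise version of Corollary~\ref{cor1}, and the only step that required the compact-support hypothesis in Corollary~\ref{cor1} — namely to convert the $L^{\infty}$ gradient bound into an $L^{2}(\mu_{s})$ statement — is simply dropped, the new hypothesis supplying the pointwise control of $\rho_{s}(x)$ uniformly in $s\le t$ that is needed instead.
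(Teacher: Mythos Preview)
Your argument is correct and is exactly the paper's own proof: the paper simply states that the result ``can be directly derived from the inequality \eqref{ineq-2}'', which is precisely the pointwise bound you reconstruct by splitting $|P_{s,t}f(x)-\mu_t(f)|$ into the two coupling pieces and then controlling $\rho_s(x)+(\mu_s(\rho_s^2))^{1/2}$ uniformly in $s\le t$ via the new hypothesis and Theorem~\ref{invariant-measure-th}. The only cosmetic point is that your $C(t)$ depends on $x$ through $\sup_{s\le t}\rho_s(x)$, but this is implicit in the paper's formulation as well.
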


\begin{proof}
  If $\sup_{s\in (-\infty,t]}\rho_s(x)<\infty$ for any $x\in M$ and
  $t\in I$, then the result can be directly derived from the
  inequality \eqref{ineq-2}.
\end{proof}

\begin{remark}\label{heat-back}
  Actually, our results can be applied to the following forward Cauchy
  problem via a time reversal: for $s\in [T,+\infty)$,
$$
\begin{cases}
  \partial_tu(\newdot,x)(t)=L_tu(t,\newdot)(x),\ \ & (t,x)\in (s,+\infty)\times M;\\
  u(s,x)=f(x),\ \ &\ x\in M.
\end{cases}
$$
\end{remark}

\section{Gradient estimates}\label{Sect3}

We now turn to gradient estimates for the semigroup. It is well known
that the so-called Bismut formula is a powerful tool to derive
gradient estimates of semigroups in the fixed metric case (see
\cite{bismut, ElworthyLiforms}).  Let us first recall a Bismut type
formula for $\nabla^s P_{s,t}f$ (see \cite[Corollary\ 3.2]{Cheng15}). 
To this end, define an
$\R^d\otimes \R^d$-valued process
$(Q_{s,t})_{(s,t)\in \Lambda}$ as the solution to the following
ordinary differential equation
\begin{align}\label{damp}
  \frac{\vd Q_{s,t}}{\vd
  t}=-\mathcal{R}^Z_t(u_t)Q_{s,t},\quad Q_{s,s}=\id,\ (s,t)\in \Lambda, \end{align}
where $u_t$ is the horizontal
$L_t$-diffusion process $X_{t}^{(s,x)}$ with $\pi(u_s)=x$, and
$\mathcal{R}_t^{Z}(u_t)\in \R^d\otimes\R^d$ satisfies
\begin{align*}
  \l<\mathcal{R}^{Z}_t(u_t)a,b\r>_{\R^d}&=\mathcal{R}_t^{Z}(u_ta,
                                                  u_t b),\quad a,b\in \R^d.
\end{align*}
If $\mathcal{R}_t^{Z}\geq k(t)$, $t\in I$ then we have
\begin{align}\label{add-Q}
  \|Q_{r,t}\|\leq \exp{\l(-\int_r^tk(s)\,\vd s\r)},\quad (r,t)\in \Lambda,
\end{align}
where $\|\newdot\|$ is the operator norm on $\R^d$. The
following is the derivative formula taken from \cite{Cheng15}.

\begin{proposition}\label{cheng}
  Assume that $\mathcal{R}_t^Z\geq k(t)$ for some continuous function
  $k$ on $I$. Let $(s,t)\in \Lambda$.  Then for $f\in C^1(M)$ such
  that $f$ is constant outside a compact set, and for any
  $h\in C_b^1([s,t])$ satisfying $h(s)=0$ and $h(t)=1$, we have
  \begin{align}\label{eq-4}
    u_s^{-1}\nabla^sP_{s,t}f(x)=\E^{(s,x)}\l[Q_{s,t}^*u_{t}^{-1}\nabla^tf(X_{t})\r]=\frac1{\sqrt2}\E^{(s,x)}
    \l[f(X_{t})\int_s^t h'(r)Q_{s,r}^*\vd B_r\r]
  \end{align}
  where $Q_{s,t}^*$ is the transpose of $Q_{s,t}$.
\end{proposition}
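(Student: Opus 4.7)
\emph{Plan.} The proposition has two equalities: an \emph{intertwining} formula
$u_s^{-1}\nabla^sP_{s,t}f(x)=\E^{(s,x)}[Q_{s,t}^*u_t^{-1}\nabla^tf(X_t)]$ and a \emph{Bismut} formula replacing $\nabla^tf$ by a stochastic integral. My plan is to produce, for each equality, an $\R^d$-valued local martingale on $[s,t]$ interpolating between the two sides; the equality then follows by taking expectations at the endpoints. Throughout, the exponential bound \eqref{add-Q} and the hypothesis that $f$ is constant outside a compact set (which yields an a priori pointwise bound on $|\nabla^rP_{r,t}f|_r$) are used to upgrade local martingales to true martingales, justifying the endpoint evaluation.

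\emph{Step 1: the intertwining identity.} I would show that
$$N_r:=Q_{s,r}^*\,u_r^{-1}\nabla^rP_{r,t}f(X_r),\qquad r\in[s,t],$$
is a local martingale. Applying It\^o's formula in the horizontal frame bundle, the drift of $u_r^{-1}\nabla^rP_{r,t}f(X_r)$ is produced by three sources: the backward Kolmogorov equation $\partial_rP_{r,t}f=-L_rP_{r,t}f$, the commutator $[\nabla^r,L_r]$ acting on $P_{r,t}f$, and the vertical It\^o correction in \eqref{SDE-u} that accounts for the time dependence of $g_r$. The usual Weitzenb\"ock identity on $(M,g_r)$, the contribution $-h_r$ from $\partial_rg_r=2h_r$, and the term $-\nabla^rZ_r$ from the drift $Z_r$ combine to yield a total drift equal to $\mathcal{R}_r^Z(u_r)\cdot u_r^{-1}\nabla^rP_{r,t}f(X_r)$. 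By the transpose of \eqref{damp}, $\frac{\vd}{\vd r}Q_{s,r}^*=-Q_{s,r}^*\mathcal{R}_r^Z(u_r)$, this drift is cancelled after multiplication by $Q_{s,r}^*$, so $N_r$ is a local martingale. The integrability discussion above turns it into a true martingale, and $\E N_s=\E N_t$ with $Q_{s,s}=\id$ gives the first equality.

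\emph{Step 2: the Bismut formula.} For $h\in C_b^1([s,t])$ with $h(s)=0$ and $h(t)=1$, set
$$M_r:=h(r)\,N_r-\frac1{\sqrt2}\,P_{r,t}f(X_r)\int_s^r h'(u)\,Q_{s,u}^*\,\vd B_u.$$
I would verify that $M_r$ is a local martingale. Since $P_{r,t}f(X_r)$ is a martingale with
$\vd P_{r,t}f(X_r)=\sqrt2\,\langle u_r^{-1}\nabla^rP_{r,t}f(X_r),\vd B_r\rangle$, It\^o's product rule gives two non-martingale contributions to $\vd M_r$: the term $h'(r)N_r\,\vd r$ from differentiating $h$, and the cross-variation
$$-\frac1{\sqrt2}\,\vd\Bigl[P_{\cdot,t}f(X_\cdot),\int_s^\cdot h'(u)Q_{s,u}^*\,\vd B_u\Bigr]_r=-h'(r)\,Q_{s,r}^*\,u_r^{-1}\nabla^rP_{r,t}f(X_r)\,\vd r=-h'(r)N_r\,\vd r,$$
which cancel. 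The bound \eqref{add-Q} together with $h\in C_b^1$ controls $\int_s^t|h'(r)|^2\|Q_{s,r}^*\|^2\,\vd r$, and with boundedness of $f$ the process $M_r$ is a true martingale. Evaluating $\E M_s=0$ against $\E M_t=\E N_t-\frac1{\sqrt2}\E^{(s,x)}[f(X_t)\int_s^t h'(r)Q_{s,r}^*\,\vd B_r]$ and invoking Step 1 yields the second equality.

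\emph{Main obstacle.} The delicate point is the Weitzenb\"ock-type commutator computation of Step 1 on an \emph{evolving} manifold: one must track the vertical It\^o correction coming from the last term in \eqref{SDE-u} and verify that, combined with the spatial commutator $[\nabla^r,\Delta_r+Z_r]$, the resulting drift equals exactly $\mathcal{R}_r^Z$ applied to $u_r^{-1}\nabla^rP_{r,t}f(X_r)$. This cancellation is precisely what makes $\mathcal{R}_r^Z=\Ric_r-h_r-\nabla^rZ_r$ the canonical damping tensor on $(M,g_r)_{r\in I}$; once it is in place, the remainder of the argument parallels the Elworthy--Li proof in the fixed-metric case.
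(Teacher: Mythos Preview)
The paper does not supply its own proof of this proposition: it is quoted verbatim from \cite[Corollary~3.2]{Cheng15} and used as a black box. So there is no ``paper's proof'' to compare against here.

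That said, your outline is the standard martingale route to such formulae and is essentially correct. Step~1 is the heart of the matter: the key identity is that the scalarised gradient $u_r^{-1}\nabla^rP_{r,t}f(X_r)$ has It\^o drift equal to $\mathcal{R}_r^Z(u_r)$ applied to itself, so that premultiplying by $Q_{s,r}^*$ (whose derivative is $-Q_{s,r}^*\mathcal{R}_r^Z(u_r)$ by transposing \eqref{damp}) kills the drift. One small point worth tightening: $\nabla^rZ_r$ need not be symmetric, so $\mathcal{R}_r^Z(u_r)^*$ and $\mathcal{R}_r^Z(u_r)$ may differ; the cancellation still goes through, but you should be explicit about which side the operator acts on when you write ``the drift equals $\mathcal{R}_r^Z(u_r)\cdot u_r^{-1}\nabla^rP_{r,t}f(X_r)$''. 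Step~2 is clean: your covariation computation is right and the endpoint evaluation with $h(s)=0$, $h(t)=1$ gives exactly \eqref{eq-4}. The integrability justifications you sketch (via \eqref{add-Q}, boundedness of $f$, and compact support of $\nabla^tf$) are the standard ones and suffice. This is precisely the Elworthy--Li argument adapted to the time-dependent metric, and it is almost certainly what the cited reference does as well.
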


This pointwise gradient estimate can be derived from Proposition
\ref{cheng}.
\begin{theorem}\label{cor-gradient}
  Suppose that Hypothesis {\bf(H3)} holds. Let $(\mu_t)_{t\in I}$ be
  the evolution system of measures for $P_{s,t}$. Then,
  \begin{enumerate}[\rm(a)]
  \item for every $f\in C^1(M)$ such that $f$ is constant outside a
    compact set and $1\leq p<\infty$,
    \begin{equation}\label{grad-1}
      \l\||\nabla^s P_{s,t}f|_s\r\|_{p, s}\leq \exp\l(-\int_s^tk(r)\,\vd r\r)\l\||\nabla^t f|_t\r\|_{p,t},\quad 
      (s,t)\in \Lambda;
    \end{equation}
  \item for any $1<p<\infty$, there exists a positive constant
    $C_1=C_1(p)$ such that
  $$\l\||\nabla^s P_{s,t}f|_s\r\|_{p, s}\leq C_1\l(\max_{r\in [s,(t-1)\vee s]}
  \int_{r}^{(r+1)\wedge t}\exp\l(2\int_s^rk(u)\,\vd u\r)\vd
  r\r)^{-1/2}\|f\|_{p,t},\quad (s,t)\in \Lambda$$
  holds for every $f\in \mathscr{B}_b(M)$ and $x\in M;$
\item for $f\in \mathscr{B}_b(M)$, there exists a positive constant
  $C_1=C_1(p)$ such that
    $$\l\||\nabla^sP_{s,t}f|_s\r\|_{\infty}^{\mathstrut}\leq C_1\l(\max_{r\in [s,(t-1)\vee s]}\int_{r}^{(r+1)\wedge t}\exp\l(2\int_s^rk(u)\,\vd u\r)\vd r\r)^{-1/2}\|f\|_{\infty}^{\mathstrut},\quad(s,t)\in \Lambda.$$
  \end{enumerate}
\end{theorem}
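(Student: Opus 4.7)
The proof proceeds by applying the two forms of the Bismut formula in Proposition~\ref{cheng}. Part (a) uses the first (gradient) form, while parts (b) and (c) use the second (martingale) form with an appropriately chosen weight~$h$.

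For part (a), I would start from
$u_s^{-1}\nabla^s P_{s,t}f(x)=\E^{(s,x)}[Q_{s,t}^{*}u_t^{-1}\nabla^t f(X_t)]$, take pointwise norms, and apply the operator bound \eqref{add-Q} to obtain
$$|\nabla^s P_{s,t}f|_s(x)\le \exp\!\l(-\int_s^tk(r)\,\vd r\r)P_{s,t}(|\nabla^t f|_t)(x).$$
Raising both sides to the $p$-th power, integrating against $\mu_s$, applying Jensen's inequality to the Markov kernel $P_{s,t}$, and using the evolution-system property $\int P_{s,t}g\,\vd\mu_s=\int g\,\vd\mu_t$ then yields \eqref{grad-1}.

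For parts (b) and (c), I would use the martingale Bismut formula with $h\in C_b^1([s,t])$ chosen as follows. Let $r_0\in[s,(t-1)\vee s]$ realize the maximum of $r\mapsto\int_r^{(r+1)\wedge t}\exp(2\int_s^u k(v)\,\vd v)\,\vd u$, and set $\ell=(r_0+1)\wedge t-r_0\in(0,1]$. Take $h'$ to be a smooth approximation of the weight
$\mathbf{1}_{[r_0,r_0+\ell]}(r)\cdot\exp(2\int_s^r k(u)\,\vd u)\big/\int_{r_0}^{r_0+\ell}\exp(2\int_s^u k(v)\,\vd v)\,\vd u$,
ensuring $h(s)=0$ and $h(t)=1$. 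Combining the Bismut formula with H\"older's inequality of conjugate exponents $(p,q)$ and the Burkholder--Davis--Gundy inequality applied to the It\^o integral $\int h'(r)Q_{s,r}^{*}\,\vd B_r$, together with the bound $\|Q_{s,r}\|\le\exp(-\int_s^rk)$, produces
$$|\nabla^sP_{s,t}f(x)|\le C_p\bigl(P_{s,t}|f|^p(x)\bigr)^{1/p}\l(\int_s^t h'(r)^2\exp\!\l(-2\int_s^r k(u)\,\vd u\r)\vd r\r)^{1/2}.$$
The particular choice of $h$ (Cauchy--Schwarz equality case on $[r_0,r_0+\ell]$) collapses the bracket to $\bigl(\int_{r_0}^{r_0+\ell}\exp(2\int_s^u k)\,\vd u\bigr)^{-1}$, i.e.\ the reciprocal of the maximum in the statement. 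Raising to the $p$-th power, integrating against $\mu_s$ and using the evolution-system property then gives (b); for (c), the same argument with $p=q=2$ together with $P_{s,t}f^2(x)\le\|f\|_\infty^2$, followed by taking the supremum over $x$, yields the $L^\infty$ estimate.

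The main technical obstacle is the construction of $h$: the exponentially weighted indicator is not $C_b^1$ and must be realized as a limit of smooth approximations, with error control on the resulting Cauchy--Schwarz bound. A secondary subtlety is that $Q_{s,r}$ is adapted rather than deterministic, so the stochastic integral is a genuine It\^o integral and BDG must be invoked, which is what forces the $p$-dependence of $C_1$ in (b) and (c).
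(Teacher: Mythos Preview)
Your treatment of part~(a) is exactly what the paper does. For parts~(b) and~(c), your proposal is correct but follows a genuinely different route from the paper's.

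The paper splits into two cases. For $t-s\le 1$ it applies the Bismut formula on the whole interval $[s,t]$ with the smooth choice
\[
h(r)=\frac{\int_s^r\exp\bigl(2\int_s^\rho k(u)\,\vd u\bigr)\vd\rho}{\int_s^t\exp\bigl(2\int_s^\rho k(u)\,\vd u\bigr)\vd\rho},
\]
obtaining the pointwise bound with coefficient $\bigl(\int_s^t\exp(2\int_s^\rho k)\,\vd\rho\bigr)^{-1/2}$. For $t-s>1$ it does \emph{not} change $h$; instead it writes $P_{s,t}=P_{s,r}P_{r,r+1}P_{r+1,t}$, applies part~(a) on $[s,r]$ to pass the gradient through, applies the short-time Bismut estimate on $[r,r+1]$, uses contractivity on $[r+1,t]$, and finally optimizes over $r\in[s,t-1]$.

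Your approach is a single-shot application of the Bismut formula with $h'$ concentrated on the optimal subinterval $[r_0,(r_0+1)\wedge t]$. This is more direct and bypasses the semigroup decomposition entirely; the price is that your $h$ is only piecewise $C^1$, so you need the (routine) density/approximation step you flag, whereas the paper's $h$ is globally $C^1$ and needs no such argument. Conversely, the paper's two-step proof illustrates how part~(a) feeds into~(b), which is conceptually useful elsewhere in the paper. Both routes yield the same constant.
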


\begin{proof}
  By the first equality in \eqref{eq-4} and inequality~\eqref{add-Q},
  the first assertion in (a) can be derived directly.  It is also easy
  to see that (c) follows from (b). Hence, it suffices to prove (b).

  For $p\in (1,\infty)$ and $t-s\leq 1$, by using the integration by
  parts formula, we have
  \begin{align}\label{eq:Gradest}
    |\nabla^sP_{s,t}f|_s^p(x)&=\frac1{\sqrt2}\Big|\E^{(s,x)}
                               \Big[f(X_{t})\int_s^t h'(r)Q_{s,r}^*\vd B_r\Big]\Big|^p \notag\\
                             &\leq \frac1{\sqrt2}P_{s,t}|f|^p(x)\l(\E^{(s,x)}\Big|\int_s^t h'(r)Q_{s,r}^*\vd B_r\Big|^q\r)^{p/q}\notag\\
                             &\leq \frac{c_p^p}{\sqrt2}P_{s,t}|f|^p(x)\l(\E^{(s,x)}\Big|\int_s^t {h'}^2(r)\|Q_{s,r}\|^2\,\vd r\Big|^{q/2}\r)^{p/q}\notag\\
                             &\leq \frac{c_p^p}{\sqrt2}P_{s,t}|f|^p(x)\l(\E^{(s,x)}\Big|\int_s^t {h'}^2(r)
                               \exp\l(2\int_s^rk(u)\,\vd u\r)\,\vd r\Big|^{q/2}\r)^{p/q}
  \end{align}
  where
  $$h(r)=\frac{\int_s^r\exp\l(2\int_s^{\rho}k(u)\,\vd u\r)\,\vd\rho}
  {\int_s^t\exp\l(2\int_s^{\rho}k(u)\,\vd u\r)\,\vd\rho}.$$
  It then follows that
  \begin{align*}
    |\nabla^sP_{s,t}f|_s^p(x)\leq \frac{c_p^p}{\sqrt2}P_{s,t}|f|^p(x)
    \l(\int_s^t\exp\l(2\int_s^{r}k(u)\,\vd u\r)\,\vd r\r)^{-p/2}.
  \end{align*}
  Integrating both sides of the inequality above with respect to
  $\mu_s$, we arrive at
  \begin{align}\label{grad-2}
    \mu_s(|\nabla^sP_{s,t}f|_s^p)\leq \frac{c_p^p}{\sqrt2}\mu_t(|f|^p)
    \l(\int_s^t\exp\l(2\int_s^{r}k(u)\,\vd u\r)\,\vd r\r)^{-p/2}.
  \end{align}

  It leaves us to check the case for $t-s>1$. For any $r\in [s,t-1]$, 
combining Eq.~\eqref{grad-1} and Eq.~\eqref{grad-2}, we have
  \begin{align*}
 & |\nabla^sP_{s,r}P_{r,t}f(x)|^p_s\leq \exp\l(-p\int_s^{r}k(r)\,\vd r\r)P_{s,r}|\nabla^{r}P_{r,t}f|_{r}^p(x)\\
  &\ \leq \frac{c_p^p}{\sqrt{2}}\exp\l(-p\int_s^{r}k(r)\,\vd r\r)
 \l(\int_r^{r+1}\exp\l(\int_r^{\rho}k(u)\,\vd u\r)\,\vd \rho\r)^{-p/2}P_{s,r}(P_{r,r+1}|P_{r+1,t}f|^p)(x)\\
  &\ \leq \frac{c_p^p}{\sqrt{2}}\l(\int_r^{r+1}\exp\l(\int_s^{\rho}k(u)\,\vd u\r)\,\vd \rho\r)^{-p/2}P_{s,t}|f|^p(x).
  \end{align*}
Integrating both sides by $\mu_s$ and minimizing the coefficient in $r$,  we obtain the
desired conclusion.
\end{proof}
\begin{remark}\label{rem3}
  In Theorem \ref{cor-gradient}(c), the inequality does not need some evolution
  system of measures as the reference measures. So the condition for
  this result can be weaken by only using
$$\mathcal{R}_t^Z\geq k(t)$$
for some function $k\in C(I)$.
\end{remark}

\section{Log-Sobolev inequality and hypercontractivity}\label{Sect4}
In this section, we prove hypercontractivity for $P_{s,t}$. Let us
first introduce the following log-Sobolev inequality, which is
essential to the proof of our hypercontractivity theorem.

\begin{proposition}\label{P-log-S}
  If $\mathcal{R}_t^Z\geq k(t)$ for some function $k\in C(I)$ then for
  any $p\in (1,\infty)$,
  \begin{align}\label{Log-S-P}
    P_{s,t}(f^2\log f^2)\leq 4\l(\int_s^t\exp\l(-2\int_r^tk(u)\,\vd u\r)\vd r\r)P_{s,t}|\nabla^t f|_t^2+P_{s,t}f^2\log P_{s,t}f^2,\quad (s,t)\in \Lambda,
  \end{align}
  holds for $f \in C_c^1(M)$.
\end{proposition}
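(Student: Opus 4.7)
The plan is to use the classical semigroup interpolation (Bakry--Emery style) along the time parameter, combined with the pointwise gradient estimate that comes out of Proposition \ref{cheng}. The key auxiliary function will be
$$\Phi(r):=P_{s,r}\bigl((P_{r,t}f^2)\log(P_{r,t}f^2)\bigr),\quad r\in[s,t].$$
By construction $\Phi(t)=P_{s,t}(f^2\log f^2)$ and $\Phi(s)=(P_{s,t}f^2)\log(P_{s,t}f^2)$, so proving the inequality reduces to estimating $\Phi(t)-\Phi(s)=\int_s^t\Phi'(r)\,\vd r$.

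First I would compute $\Phi'(r)$. Writing $u_r:=P_{r,t}f^2$ and using the forward/backward Kolmogorov equations recalled in Section~\ref{Sect2} (so that $\partial_r P_{s,r}g=P_{s,r}L_r g$ and $\partial_r u_r=-L_r u_r$), the product rule gives
$$\Phi'(r)=P_{s,r}\bigl(L_r(u_r\log u_r)\bigr)+P_{s,r}\bigl((\log u_r+1)\,\partial_r u_r\bigr).$$
Since $L_r$ is a diffusion operator, the chain rule $L_r(\phi\circ u_r)=\phi'(u_r)L_r u_r+\phi''(u_r)|\nabla^r u_r|_r^2$ applied with $\phi(x)=x\log x$ yields $L_r(u_r\log u_r)=(\log u_r+1)L_r u_r+|\nabla^r u_r|_r^2/u_r$. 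The two terms involving $L_r u_r$ cancel, leaving the ``carré du champ'' identity
$$\Phi'(r)=P_{s,r}\!\left(\frac{|\nabla^r P_{r,t}f^2|_r^2}{P_{r,t}f^2}\right).$$

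Next I would bound the integrand using the pointwise form of the gradient estimate from Proposition~\ref{cheng}. Combining the first equality in \eqref{eq-4} with \eqref{add-Q} gives
$$|\nabla^r P_{r,t}g|_r(x)\leq \exp\!\l(-\int_r^t k(u)\,\vd u\r)P_{r,t}|\nabla^t g|_t(x)$$
for $g$ smooth with appropriate decay. Taking $g=f^2$ so that $|\nabla^t f^2|_t=2|f|\,|\nabla^t f|_t$ and then applying Cauchy--Schwarz to $P_{r,t}(|f|\,|\nabla^t f|_t)$, I obtain
$$\frac{|\nabla^r P_{r,t}f^2|_r^2}{P_{r,t}f^2}\leq 4\exp\!\l(-2\int_r^t k(u)\,\vd u\r)P_{r,t}|\nabla^t f|_t^2.$$
Plugging this into the previous display, using $P_{s,r}P_{r,t}=P_{s,t}$, and integrating on $[s,t]$ yields exactly \eqref{Log-S-P}.

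The one point that needs care, and which I would treat as the main technical obstacle, is the justification of differentiating $\Phi$ and of the pointwise gradient bound when $P_{r,t}f^2$ vanishes or is not smooth enough. Since $f\in C_c^1(M)$, $P_{r,t}f^2$ is strictly positive and smooth away from $r=t$ by parabolic regularity, so $\Phi$ is $C^1$ on $[s,t)$; the boundary value at $r=t$ is handled by a standard truncation $f^2\leadsto f^2+\varepsilon$ followed by $\varepsilon\downarrow 0$, exploiting that $x\log x$ is continuous at $0$. The application of Proposition~\ref{cheng} to the function $g=f^2+\varepsilon$ requires $g$ to be constant outside a compact set, which is automatic since $f\in C_c^1(M)$ (after absorbing the additive constant via a cut-off and passing to the limit). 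Once these approximation steps are in place, the derivation above goes through and gives the stated inequality.
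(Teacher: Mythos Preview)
Your proposal is correct and follows essentially the same route as the paper. The paper phrases the interpolation via It\^o's formula applied to the process $(P_{r,t}f^2)\log(P_{r,t}f^2)(X_{r\wedge\tau_n})$ (with a stopping-time localization and $f_\delta=(f^2+\delta)^{1/2}$ to ensure positivity), but after taking expectations this is exactly your computation of $\Phi'(r)$; the carr\'e du champ identity and the Cauchy--Schwarz step on $P_{r,t}(|f|\,|\nabla^t f|_t)$ are identical in both arguments.
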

\begin{proof}
  Without loss of generality, we suppose $f>\delta>0$. Otherwise, let
  $f_{\delta}=(f^2+\delta)^{{1}/{2}}$. Then by letting
  $\delta \rightarrow 0$, we obtain the conclusion.

  Consider the process
  $(P_{r,t}f^2)\log (P_{r,t}f^2)(X_{r\wedge \tau_n})$ where as above
  \begin{align}\label{tau-n}
    \tau_n=\inf\{t\in (s,T]:\rho_t(X_t)\geq n\}, \quad n\geq 1.
  \end{align}
  Applying It\^o's formula, we have
  \begin{align*}
    \vd (P_{r,t}f^2)\log (P_{r,t}f^2)(X_r)&=\vd M_r+(L_r+\partial_r)(P_{r,t}f^2\log P_{r,t}f^2)(X_r)\,\vd r\\
                                          &=\vd M_r + \l(\frac1{P_{r,t}f^2}|\nabla^rP_{r,t}f^2|_r^2\r)(X_r)\,\vd r,\quad s<r<\tau_n\wedge t,
  \end{align*}
  where $M_r$ is a local martingale.  By this and the estimate,
$$|\nabla^r P_{r,t}f^2|^2_r\leq \exp\l(-2\int_r^tk(u)\,\vd u\r)\l(P_{r,t}|\nabla^tf^2|_t\r)^2
\leq 4\exp\l(-2\int_r^tk(u)\,\vd u\r)(P_{r,t}f^2)P_{r,t}|\nabla^t f|_t^2,$$
we obtain
$$\vd (P_{r,t}f^2)\log (P_{r,t}f^2)(X_r)\leq \vd M_r+4\exp\l(-2\int_r^tk(u)\,\vd u\r)P_{r,t}|\nabla^tf|_t^2(X_r)\,\vd r,\quad   s<r<\tau_n\wedge t.$$
Integrating both sides from $s$ to $t\wedge \tau_n$, we have
\begin{align*}
  &\E^{(s,x)}\big[f^2\log f^2(X_{t\wedge \tau_n})\big]\\
                         &\leq \l(P_{s,t}f^2\log P_{s,t}f^2\r)(x)+\E^{(s,x)}\l[\int_s^{t\wedge \tau_n}4\exp\l(-2\int_r^tk(u)\,\vd u\r)P_{r,t}|\nabla^tf|_t^2(X_r)\,\vd r\r].
\end{align*}
Then again by dominated convergence, letting $n \uparrow +\infty$, we obtain
\begin{equation*}
  P_{s,t}(f^2\log f^2)\leq 4\l(\int_s^t\exp\l(-2\int_r^tk(u)\,\vd u\r)\,\vd r\r)P_{s,t}|\nabla^t f|_t^2+P_{s,t}f^2\log P_{s,t}f^2.\qedhere
\end{equation*}
\end{proof}

The log-Sobolev inequality leads to the hypercontractivity of
$(P_{s,t})$.

\begin{theorem}
  Suppose that Hypothesis {\bf(H3)} holds and $(\mu_t)$ is the
  evolution system of measures for $P_{s,t}$.  Let $(s,t)\in \Lambda$
  and $p,q\in (1,\infty)$ such that
$$q\leq \exp{\l(-\frac1{4}\int_s^t\l(\int_r^t\exp\l(-2\int_u^tk(z)\,\vd z\r)\,\vd u\r)^{-1}\,\vd r\r)}(p-1)+1.$$
Then $P_{s,t}: L^p(M,\mu_t)\rightarrow L^q(M,\mu_s)$ satisfies
$$\|P_{s,t}\|_{(p,t)\rightarrow (q,s)}\leq 1.$$
\end{theorem}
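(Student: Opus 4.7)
The plan is to adapt Gross's classical derivation of hypercontractivity from a logarithmic Sobolev inequality to the present time-inhomogeneous setting, with Proposition~\ref{P-log-S} as the main analytic input. By approximation it suffices to treat $f\in C_c^1(M)$ with $f>0$.

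Fix $(s,t)\in\Lambda$ and a smooth exponent $q\colon[s,t]\to(1,\infty)$ interpolating between $q(s)=q$ and $q(t)=p$, to be pinned down below. Set $u_r=P_{r,t}f$, so that $u_t=f$ and $\partial_r u_r=-L_r u_r$. Define
$$\Phi(r)=\int_M u_r^{q(r)}\,\vd\mu_r,\qquad N(r)=\Phi(r)^{1/q(r)}=\|P_{r,t}f\|_{L^{q(r)}(M,\mu_r)}.$$
Since $N(t)=\|f\|_{p,t}$ and $N(s)=\|P_{s,t}f\|_{q,s}$, the goal reduces to the monotonicity $N(s)\leq N(t)$.

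Differentiating $\Phi$ and using the identity $\frac{d}{dr}\int\psi\,\vd\mu_r=\int L_r\psi\,\vd\mu_r$, which follows by differentiating the evolution-system relation $\int P_{r,t}\psi\,\vd\mu_r=\int\psi\,\vd\mu_t$ in $t$, together with the cancellation of the two $L_r u_r$-contributions, I would obtain
$$\Phi'(r)=q'(r)\int u_r^{q(r)}\log u_r\,\vd\mu_r+q(r)(q(r)-1)\int u_r^{q(r)-2}|\nabla^r u_r|_r^2\,\vd\mu_r.$$
Setting $v_r=u_r^{q(r)/2}$, this rearranges into
$$q(r)^2\,\Phi(r)\,\frac{d}{dr}\log N(r)=q'(r)\,\mathrm{Ent}_{\mu_r}(v_r^2)+4(q(r)-1)\int|\nabla^r v_r|_r^2\,\vd\mu_r.$$

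The remaining analytic step is to bound $\mathrm{Ent}_{\mu_r}(v_r^2)$. For this I would pass from Proposition~\ref{P-log-S} (semigroup log-Sobolev for the pair $(\sigma,r)$) to a genuine log-Sobolev inequality for $\mu_r$: integrate against $\mu_\sigma$, apply Jensen's inequality for $x\mapsto x\log x$, and let $\sigma\to-\infty$; Corollary~\ref{cor1} together with Hypothesis~\textbf{(H3)} provide both the convergence $P_{\sigma,r}\varphi\to\mu_r(\varphi)$ and the finiteness of the resulting log-Sobolev constant. Substituting this LSI into the derivative formula reduces the monotonicity $\frac{d}{dr}\log N\geq 0$ to a first-order differential inequality for $q(r)$; choosing $q$ to saturate it and integrating subject to the boundary conditions $q(s)=q$, $q(t)=p$ delivers exactly the integral relation between $p$ and $q$ stated in the theorem, whence $\|P_{s,t}\|_{(p,t)\to(q,s)}\leq 1$.

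The main obstacle is the rigorous justification of differentiating $\Phi(r)=\int u_r^{q(r)}\,\vd\mu_r$, where the integrand, the measure, and the exponent all vary with $r$: one must verify the required integrability, commute $\frac{d}{dr}$ with the integral, and cope with the lack of compact support via a cut-off argument analogous to the stopping-time procedure used in the proof of Proposition~\ref{P-log-S}. The gradient estimates of Theorem~\ref{cor-gradient} together with the moment bound \eqref{ineq-1} of Theorem~\ref{invariant-measure-th} should supply the integrability and decay needed to close the argument.
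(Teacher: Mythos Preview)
Your Gross-type strategy is sound and is in fact the argument the paper uses later, in the proof of Theorem~\ref{log-S-superbound} (implication (b)$\Rightarrow$(a)), where Lemma~\ref{lem1} is invoked to differentiate $\|P_{s,t}f\|_{q(s),s}$ and a log-Sobolev inequality for $\mu_s$ is fed in. For the present theorem, however, the paper proceeds differently: it never passes through an LSI for the measures $\mu_r$. Instead it fixes an auxiliary time $r\le s$, applies It\^o's formula to the process $(P_{\sigma,t}f)^{q(\sigma)}(X_\sigma)$ (started from $x$ at time~$r$), and differentiates the \emph{pointwise} quantity $\bigl(P_{r,\sigma}(P_{\sigma,t}f)^{q(\sigma)}\bigr)^{1/q(\sigma)}$ in $\sigma$. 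The entropy-type term that appears is then controlled by Proposition~\ref{P-log-S} applied to the semigroup itself, not to $\mu_r$; only after the pointwise monotonicity $\bigl(P_{r,s}(P_{s,t}f)^{q(s)}\bigr)^{1/q(s)}\le (P_{r,t}f^p)^{1/p}$ is established does one integrate against $\mu_r$ and use the evolution-system property. This avoids both the limit $\sigma\to-\infty$ and the delicate justification of differentiating under $\mu_r$.

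The concrete gap in your plan is the claim that the resulting ODE for $q$ ``delivers exactly the integral relation between $p$ and $q$ stated in the theorem''. It does not. Your limiting procedure (integrate Proposition~\ref{P-log-S} for the pair $(\sigma,r)$ against $\mu_\sigma$, send $\sigma\to-\infty$, and use Corollary~\ref{cor1}) produces
\[
\mathrm{Ent}_{\mu_r}(g^2)\ \le\ C_\infty(r)\,\big\||\nabla^r g|_r\big\|_{2,r}^2,\qquad
C_\infty(r)=4\int_{-\infty}^{r}\exp\Bigl(-2\int_u^{r}k(z)\,\vd z\Bigr)\vd u,
\]
so that the saturating choice of $q$ satisfies $q'(r)/(q(r)-1)=-4/C_\infty(r)$. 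Integrating this gives a threshold involving $\int_s^t\bigl(\int_{-\infty}^{r}\e^{-2\int_u^{r}k}\,\vd u\bigr)^{-1}\vd r$, whereas the theorem's formula features $\int_r^t\e^{-2\int_u^{t}k}\,\vd u$ in the denominator --- a completely different time window. Thus your route yields \emph{a} hypercontractivity estimate, but not the one asserted; to reproduce the stated constant you have to follow the paper's pointwise argument and apply the semigroup LSI directly rather than first converting it into an LSI for the evolution measures.
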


\begin{proof}
  For the sake of conciseness, we assume $f>\delta>0$, otherwise we
  can use a similar argument as in the proof of Proposition
  \ref{P-log-S}. Consider the process
  $(P_{s,t}f)^{q(s)}(X_{s\wedge \tau_n})$, where
 $$q(s)=\exp{\l(-\frac1{4}\int_s^t\l(\int_r^t\e^{-2\int_u^tk(z)\,\vd z}\,\vd u\r)^{-1}\,\vd r\r)}(p-1)+1.$$
  Using the It\^{o} formula, we have that for $ s<\tau_n\wedge t$,
\begin{align*}
  \vd (P_{s,t}f)^{q(s)}(X_s)&=\vd M_s+(L_s+\partial_s)(P_{s,t}f)^{q(s)}(X_s)\,\vd s\\
                            &=\vd M_s+(P_{s,t}f)^{q(s)}\l[q(s)(q(s)-1)|\nabla^s\log P_{s,t}f|_s^2+q'(s)\log P_{s,t}f\r](X_s)\,\vd s.
\end{align*}
Therefore, for $r\leq s\leq t<T$,
\begin{align*}
  \E^{(r,x)}&\big[(P_{s,t}f)^{q(s)}(X_{s\wedge \tau_n})\big]-(P_{r,t}f)^{q(r)}(x)\\
  =&\int_r^{s}\bigg(q(u)(q(u)-1)\E^{(r,x)}\l[(P_{u,t}f)^{q(u)-2}|\nabla^uP_{u,t}f|_u^2(X_{u\wedge \tau_n})\r]\\
                                      &\qquad \quad \quad \ +q'(u)\E^{(r,x)}\l[(P_{u,t}f)^{q(u)}\log P_{u,t}f(X_{u\wedge \tau_n})\r]\bigg)\,\vd u.
\end{align*}
By using the dominant convergence theorem and letting
$n\rightarrow +\infty$, we have
\begin{align*}
  P_{r,s}(P_{s,t}f&)^{q(s)}(x)-(P_{r,t}f)^{q(r)}(x)\\
                  &= \int_r^s\bigg[q(u)(1-q(u))P_{r,u}\l((P_{u,t}f)^{q(u)-2}|\nabla^uP_{u,t}f|_u^2\r)(x)\\
                  &\quad \quad \qquad  +q'(u)P_{r,u}((P_{u,t}f)^{q(u)}\log P_{u,t}f)(x)\bigg]\,\vd u
\end{align*}
which implies
\begin{align*}
  \frac{\vd}{\,\vd s}P_{r,s}(P_{s,t}f)^{q(s)}&=q'(s)P_{r,s}((P_{s,t}f)^{q(s)}\log P_{s,t}f)\\
                                           &\quad+q(s)(q(s)-1)P_{r,s}((P_{s,t}f)^{q(s)-2}|\nabla^sP_{s,t}f|_s^2).
\end{align*}
Therefore, for $(P_{r,s}(P_{s,t}f)^{q(s)})^{1/q(s)}$, we have
\begin{align*}
  &\frac{\vd }{\,\vd s}(P_{r,s} (P_{s,t}f)^{q(s)})^{1/q(s)}\\
                            &=(P_{r,s}(P_{s,t}f)^{q(s)})^{1/q(s)}\l(-\frac{q'(s)}{q(s)^2}\log P_{r,s}(P_{s,t}f)^{q(s)}+\frac1{q(s)}\frac{\partial_s(P_{r,s}(P_{s,t}f)^{q(s)})}{P_{r,s}(P_{s,t}f)^{q(s)}}\r)\\
                            &\leq (P_{r,s}(P_{s,t}f)^{q(s)})^{\frac{1-q(s)}{q(s)}}\l[\l(4\int_s^t\exp\l(-2\int_r^tk(u)\,\vd u\r)\,\vd r\r)q'(s)-q(s)+1\r]\\
&\quad\times P_{r,s}\left((P_{s,t}f)^{q(s)-2}|\nabla^sP_{s,t}f|_s^2\right)
\end{align*}
where the last inequality comes from the log-Sobolev inequality
\eqref{Log-S-P}.  According to the definition of $q(s)$, we have
$$\frac{\vd}{\,\vd s}\l(P_{r,s}(P_{s,t}f)^{q(s)}\r)^{1/{q(s)}}\leq 0.$$
Integrating both sides from $s$ to $t$, we obtain
$$\l(P_{r,s}(P_{s,t}f)^{q(s)}\r)^{1/{q(s)}}\leq (P_{r,t}f^p)^{1/p}.$$
From this and the fact that  $q(s)/p\leq 1$, it follows that
$$\mu_r(P_{r,s}(P_{s,t}f)^{q(s)})\leq \mu_r(P_{r,t}f^p)^{q(s)/p}\leq (\mu_r(P_{r,t}f^p))^{q(s)/p},$$
which implies
$$\|P_{s,t}f\|_{q(s), s}\leq \|f\|_{p,t}.$$
This completes the proof.
\end{proof}

\section{Supercontractivity and ultraboundedness}\label{Sect5}
This section is devoted to supercontractivity and ultraboundedness for
the semigroup $P_{s,t}$ under Hypothesis {\bf(H3)}.
\subsection{Super log-Sobolev inequality and boundedness of semigroup}
We present a supercontractivity result first.
\begin{theorem}\label{log-S-superbound}
  Suppose that Hypothesis {\bf(H3)} holds. Let $(\mu_t)$ be the
  evolution system of measures associated with $P_{s,t}$.  Then the
  following properties are equivalent.
  \begin{enumerate}[{\rm (a)}]
  \item The semigroup $P_{s,t}$ is supercontractive.
  \item The family of super-log-Sobolev inequalities
    \begin{align}\label{log-S-I}
      \int f^2\log \frac{f^2}{\|f\|_{2,s}^2}\vd \mu_s\leq r \l\||\nabla^sf|_s\r\|_{2,s}^2+\beta_s(r)\|f\|^2_{2,s},\quad r>0,
    \end{align}
    holds for every $f\in H^1(M,\mu_s)$, $s\in I$, and some positive
    non-increasing function
    $\beta_s\colon (0,+\infty)\rightarrow (0,+\infty).$
  \end{enumerate}
\end{theorem}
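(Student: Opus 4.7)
The plan is to prove the two implications via the entropy method in the spirit of Gross and Davies--Simon, suitably adapted to the time-inhomogeneous setting using the evolution system $(\mu_t)_{t\in I}$. The central computation is a differential identity for
\[
\Psi(s) := \log \|P_{s,t_0}f\|_{q(s),s},
\]
where $t_0 \in I$ and $f > 0$ are fixed, and $q(\newdot)$ is a smooth path of exponents on $[s_0,t_0]$. Writing $u(s,\newdot) = P_{s,t_0}f$, the backward Kolmogorov equation gives $\partial_s u = -L_s u$, while differentiating the defining relation $\mu_s(\phi) = \mu_r(P_{r,s}\phi)$ in $s$ yields the weak forward evolution $\frac{\vd}{\vd s}\mu_s(\phi) = \mu_s(L_s \phi)$. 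Combining these with the chain rule for $L_s$, the cross term containing $L_s u$ cancels and one obtains, after the substitution $\tilde u = u^{q(s)/2}$,
\[
\Psi'(s) = \frac{q'(s)\,\mathrm{Ent}_{\mu_s}(\tilde u^2) + 4(q(s)-1) \int_M |\nabla^s \tilde u|_s^2\, \vd\mu_s}{q(s)^2 \int_M \tilde u^2\, \vd\mu_s}.
\]

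For the direction $(b) \Rightarrow (a)$, I would fix $s_0 < t_0$ and $1 < p < q < \infty$ and choose a smooth decreasing path $q(\newdot)$ with $q(s_0) = q$ and $q(t_0) = p$, so that $q'(s) < 0$. Applying the super-log-Sobolev inequality at time $s$ with $r(s) := -4(q(s)-1)/q'(s) > 0$ --- designed precisely to cancel the Dirichlet-form coefficient --- and multiplying through by $q'(s) < 0$ (which reverses the direction of the inequality) gives $\Psi'(s) \geq q'(s)\beta_s(r(s))/q(s)^2$. Integrating on $[s_0,t_0]$ and exponentiating produces the supercontractivity bound $\|P_{s_0,t_0}f\|_{q,s_0} \leq C_{p,q}(s_0,t_0)\|f\|_{p,t_0}$, with $C_{p,q}$ given explicitly as an exponential of an integral of $\beta_s$ along the chosen path.

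For the direction $(a) \Rightarrow (b)$, I would fix $s \in I$ and $r > 0$, and for small $\eta > 0$ set $q_0 = 2 + \delta$ with parameters tied so that $r = 4(q_0-1)\eta/(q_0-2)$; I then take the affine path from $q_0$ at $s$ to $2$ at $s+\eta$. Supercontractivity gives $\Psi(s) \leq \Psi(s+\eta) + \log C_{2,q_0}(s,s+\eta)$, which rewrites as a lower bound for $\int_s^{s+\eta}\Psi'(s')\,\vd s'$. Dividing by $\eta$ and letting $\eta \to 0$ extracts the pointwise inequality $\Psi'(s) \geq -\partial_\eta\log C_{2,q_0}(s,s+\eta)|_{\eta = 0^+}$; substituting the identity for $\Psi'(s)$ and noting that the Dirichlet-form coefficient is exactly $r$, one reads off the super-log-Sobolev inequality at time $s$ for $\tilde u = f^{q_0/2}$. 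Varying $q_0$ (equivalently $\delta$) covers all $r > 0$, and a density argument extends the inequality from smooth compactly supported $f$ to $H^1(M,\mu_s)$.

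The main obstacle will be the direction $(a) \Rightarrow (b)$: one must pass carefully to the limit $\eta \to 0$, which requires some regularity of $C_{p,q}(s,\newdot)$ at $s$ as well as appropriate weak continuity of the family $(\mu_t)$ at $t = s$. If supercontractivity only furnishes existence of $C_{p,q}$ without quantitative control near the diagonal, one may first have to boost to a quantitative bound by interpolating a supercontractivity estimate at an intermediate time with H\"older's inequality and the $L^p$-contractivity of the semigroup; Rothaus' lemma is then useful to reduce to the case of mean-zero functions in the entropy term.
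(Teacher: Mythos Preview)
Your direction $(b)\Rightarrow(a)$ is essentially the paper's argument: both compute the $s$-derivative of $\|P_{s,t}f\|_{q(s),s}$ (using the evolution relation $\frac{\vd}{\vd s}\mu_s(\phi)=\mu_s(L_s\phi)$, which the paper isolates as a lemma), feed $(P_{s,t}f)^{q(s)/2}$ into the super-log-Sobolev inequality, choose $q(\newdot)$ so that the Dirichlet term drops out, and integrate.

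Your direction $(a)\Rightarrow(b)$, however, is genuinely different and the obstacle you flag is real and not resolved by your sketch. You want to differentiate the supercontractivity bound at the diagonal $\eta\to0$, but the definition in force only asserts the existence of \emph{some} finite $C_{p,q}(s,t)$ for each $(s,t)\in\Lambda$; there is no a priori control of $C_{p,q}(s,s+\eta)$ as $\eta\to0$ (and typically it blows up), nor any smoothness. Interpolating against $L^p$-contractivity or invoking Rothaus does not manufacture the needed short-time regularity of $C_{p,q}$.

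The paper sidesteps this entirely. Under {\bf(H3)} one already has the semigroup log-Sobolev inequality (Proposition~\ref{P-log-S}):
\[
P_{s,t}(f^2\log f^2)\leq 4\!\int_s^t\!\e^{-2\int_r^t k}\,\vd r\;P_{s,t}|\nabla^t f|_t^2+P_{s,t}f^2\log P_{s,t}f^2.
\]
Integrating against $\mu_s$ turns the left side into $\mu_t(f^2\log f^2)$, and the only nontrivial term is $\mu_s(P_{s,t}f^2\log P_{s,t}f^2)$. The paper bounds this by applying Riesz--Thorin interpolation between $L^1\!\to\! L^1$ contractivity and the assumed $L^p\!\to\! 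L^q$ bound, obtaining $\|P_{s,t}g\|_{q_h,s}\leq C_{p,q}(s,t)^{r_h}\|g\|_{p_h,t}$ for a one-parameter family $(p_h,q_h)$ through $(1,1)$; differentiating in $h$ at $h=0$ yields
\[
\mu_s(P_{s,t}f^2\log P_{s,t}f^2)\leq \tfrac{q(p-1)}{p(q-1)}\mu_t(f^2\log f^2)+\tfrac{q}{q-1}\log C_{p,q}(s,t).
\]
Substituting back gives \eqref{log-S-I} for a \emph{fixed} time gap $t-s$, with $r=\gamma_t(t-s)$ explicit in terms of $k$; no limit $t\to s$ is taken, and no regularity of $C_{p,q}$ is needed. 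Note that this direction genuinely uses the curvature lower bound in {\bf(H3)} through Proposition~\ref{P-log-S}, whereas your proposed route would not --- which is another indication that the diagonal-differentiation idea is not the intended mechanism here.
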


First, we give a lemma which makes the proof of this theorem more concise.

\begin{lemma}\label{lem1}
  Suppose Hypothesis {\bf(H2)} holds. Let $(\mu_t)$ be an evolution
  system of measures for $P_{s,t}$. If
  $f\in C^{1,2}(I\times M)\cap C(I,L^1(M,\mu_r))$ and there exists
  some function $g \in \mathcal{B}_b(M)$ such that
  $|(\partial_r+L_r)f|\leq g$ for all $r\in I$, then
  \begin{align}\label{eq-3}
    \frac{\vd}{\vd r} \int f(r,x)\mu_r(\vd x)=\int (\partial_r+L_r)f(r,x)\mu_r(\vd x)
  \end{align}
  for every $r\in I$.
\end{lemma}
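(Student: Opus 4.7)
The plan is to derive the integrated identity
\begin{equation*}
\int f(t,x)\,\mu_t(\vd x) - \int f(r,x)\,\mu_r(\vd x) = \int_r^t \int (\partial_u+L_u)f(u,x)\,\mu_u(\vd x)\,\vd u
\end{equation*}
for all $r\le t$ in $I$ and then to read off the pointwise derivative by establishing continuity of the right-hand integrand.

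To produce the identity, I would apply It\^{o}'s formula to $f(u,X_u)$ along the $L_u$-diffusion $X_\cdot^{(r,x)}$ starting from $x$ at time $r$. With the localizing sequence $\tau_n=\inf\{u\colon \rho_u(X_u)\ge n\}$ from Theorem~\ref{nonep}, the bounded local-martingale part has zero expectation, so
\begin{equation*}
\E^{(r,x)}\bigl[f(t\wedge\tau_n, X_{t\wedge\tau_n})\bigr] - f(r,x) = \E^{(r,x)}\left[\int_r^{t\wedge\tau_n}(\partial_u+L_u)f(u,X_u)\,\vd u\right].
\end{equation*}
Since $|(\partial_u+L_u)f|\le g\in\mathcal{B}_b(M)$ and non-explosion (granted by Hypothesis~\textbf{(H2)}, which implies \textbf{(H1)}) yields $\tau_n\uparrow\infty$, dominated convergence carries the right-hand side to $\int_r^t P_{r,u}\bigl((\partial_u+L_u)f(u,\cdot)\bigr)(x)\,\vd u$; the continuity condition $f\in C(I,L^1(M,\mu_r))$ combined with the uniform bound on the time integral handles the left-hand side and produces
\begin{equation*}
P_{r,t}(f(t,\cdot))(x) - f(r,x) = \int_r^t P_{r,u}\bigl((\partial_u+L_u)f(u,\cdot)\bigr)(x)\,\vd u.
\end{equation*}
Integrating against $\mu_r$, applying Fubini (valid by boundedness of $g$), and invoking the defining relation $\mu_r(P_{r,u}\phi)=\mu_u(\phi)$ of the evolution system then yields the integrated identity above.

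Setting $G(u):=\int(\partial_u+L_u)f(u,x)\,\mu_u(\vd x)$ and $\psi(t):=\int f(t,x)\,\mu_t(\vd x)$, the identity reads $\psi(t)-\psi(r)=\int_r^t G(u)\,\vd u$, so the claim $\psi'(r)=G(r)$ at every $r\in I$ reduces to continuity of $G$. Pointwise continuity of $u\mapsto (\partial_u+L_u)f(u,x)$ follows from $f\in C^{1,2}(I\times M)$, and $g\in\mathcal{B}_b(M)$ provides a uniform dominant, so continuity of $G$ is equivalent to weak continuity of $u\mapsto\mu_u$. I would establish the latter by fixing $r$ below $u$ in the evolution identity $\mu_u(\phi)=\mu_r(P_{r,u}\phi)$ and exploiting continuity of $u\mapsto P_{r,u}\phi$ for $\phi\in C_b(M)$ (forward Kolmogorov equation), while tightness of $\{\mu_u\}$ on compact subintervals --- needed to handle the non-uniformity of $(\partial_u+L_u)f(u,\cdot)\to(\partial_{u_0}+L_{u_0})f(u_0,\cdot)$ tested against varying measures --- is supplied by the Lyapunov bound $\sup_{u\le t}\mu_u(\varphi\circ\rho_u)\le H(t)$ from Theorem~\ref{invariant-measure-th}. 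I expect this final continuity step --- assembling continuity of the integrand, the uniform bound, weak continuity of $(\mu_u)$, and tightness --- to be the principal technical obstacle; the preceding steps are essentially bookkeeping with It\^{o}'s formula and the evolution-system identity.
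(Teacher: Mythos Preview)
Your proposal is correct and relies on the same core ingredients as the paper --- the evolution-system identity $\mu_r(\phi)=\mu_s(P_{s,r}\phi)$, the Kolmogorov/It\^o formula, and dominated convergence --- but you deploy them in a more roundabout order. The paper proceeds directly: fix any $s<r$, rewrite
\[
\int f(r,x)\,\mu_r(\vd x)=\int P_{s,r}f(r,\cdot)(x)\,\mu_s(\vd x),
\]
and differentiate the right-hand side under the \emph{fixed} measure $\mu_s$, using $\frac{\vd}{\vd r}P_{s,r}f(r,\cdot)=P_{s,r}(\partial_r+L_r)f(r,\cdot)$ and the uniform bound $|P_{s,r}(\partial_r+L_r)f|\le\|g\|_\infty$ to justify dominated convergence. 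One more application of the evolution identity turns $\mu_s\bigl(P_{s,r}(\partial_r+L_r)f(r,\cdot)\bigr)$ back into $\mu_r\bigl((\partial_r+L_r)f(r,\cdot)\bigr)$, and the lemma is proved in three lines. By contrast, you first derive the integrated identity via It\^o and then must establish continuity of $G(u)=\mu_u\bigl((\partial_u+L_u)f(u,\cdot)\bigr)$ to recover the pointwise derivative; your device for that continuity (weak continuity of $u\mapsto\mu_u$ through $\mu_u(\phi)=\mu_s(P_{s,u}\phi)$ plus tightness) is precisely the paper's fixed-base-time trick, just invoked at the end rather than the start. Moving that trick to the front collapses your argument to the paper's and eliminates the entire ``principal technical obstacle'' you anticipate.
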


\begin{proof}
  For $f\in C^{1,2}(I\times M)\cap C(I, L^1(M,\mu_r))$, we have
$$ \int f(r,x)\mu_r(\vd x)=\int P_{s,r}f(r,x)\mu_s(\vd x),\quad s<r\leq T.$$
On the other hand,  using Kolmogorov's formula, we have
 $$\frac{\vd }{\vd r}P_{s,r}f(r,x)=P_{s,r}(L_r+\partial_r)f(r,x).$$
We complete the proof by applying the dominated convergence theorem.
\end{proof}

\begin{proof}[Proof of Theorem \ref{log-S-superbound}]
  First we prove ``$\text{(b)}\Rightarrow\text{(a)}$''. Let $(s,t)\in \Lambda$
  and $f\in C_c^{\infty}(M)$ such that $f>\delta>0$. By Lemma
  \ref{lem1}, we need to check the following to handle the
  derivative of $ \mu_s(P_{s,t}f)^{q(s)}$ with respect to $s$:
  \begin{align*}
    (L_s+&\partial_s)(P_{s,t}f)^{q(s)}\\
         &=L_s(P_{s,t}f)^{q(s)}-q(s)(P_{s,t}f)^{q(s)-1}(L_sP_{s,t}f)+q'(s)(P_{s,t}f)^{q(s)}\log P_{s,t} f\\
         &=q(s)(q(s)-1)|\nabla^sP_{s,t}f|_s^2(P_{s,t}f)^{q(s)-2}+q'(s)(P_{s,t}f)^{q(s)}\log P_{s,t}f.
  \end{align*}
  Under Hypothesis {\bf(H3)}, by Theorem \ref{cor-gradient}~(c), there
  exists a positive constant $c(s,t)$ such that
  $$\||\nabla^sP_{s,t}f|_s^2\|_{\infty}^{\mathstrut}\leq c(s,t)\|f\|^2_{\infty}.$$
  Moreover, $\|P_{s,t}f\|_{\infty}^{\mathstrut}\leq \|f\|_{\infty}^{\mathstrut}$ and
$$(P_{s,t}f)^{q(s)}\log^{+}(P_{s,t}f)\leq (P_{s,t}f)^{q(s)+1}\leq \|f\|_{\infty}^{q(s)+1}.$$
Combining all estimates above, we obtain
$\|(L_s+\partial_s)(P_{s,t}f)^{q(s)}\|_{\infty}^{\mathstrut}<\infty$. Now using
Lemma \ref{lem1}, we have
\begin{align*}
  \frac{\vd}{\vd s}&\mu_s((P_{s,t}f)^{q(s)})\\
                   &=\mu_s\l[L_s(P_{s,t}f)^{q(s)}-q(s)(P_{s,t}f)^{q(s)-1}(L_sP_{s,t}f)+q'(s)(P_{s,t}f)^{q(s)}\log P_{s,t} f\r]\\
                   &=q(s)(q(s)-1)\mu_s(|\nabla^sP_{s,t}f|_s^2(P_{s,t}f)^{q(s)-2})+q'(s)\mu_s\big((P_{s,t}f)^{q(s)}\log P_{s,t}f\big).
\end{align*}
Furthermore, for $\|P_{s,t}f\|_{q(s),s}$, we have
\begin{align}\label{add-eq-4}
  \frac{\vd}{\vd s}\|& P_{s,t}f\|_{q(s),s}\nonumber\\
                     &= \|P_{s,t}f\|_{q(s),s}^{-q(s)+1}(q(s)-1)\mu_s(|\nabla^sP_{s,t}f|_s^2(P_{s,t}f)^{q(s)-2})\nonumber\\
                     &\quad+\frac{q'(s)}{q(s)}\|P_{s,t}f\|_{q(s),s}^{-q(s)+1}\mu_s((P_{s,t}f)^{q(s)}\log P_{s,t}f)-\frac{q'(s)}{q(s)}\|P_{s,t}f\|_{q(s),s}\log \|P_{s,t}f\|_{q(s),s}.
\end{align}
Replacing $f$ in the log-Sobolev inequality \eqref{log-S-I} by
$f^{p/2}$, we get
$$\int f^p\log \l(\frac{f^p}{\|f^{p/{2}}\|_{2,s}^2}\r)\vd \mu_s\leq r \frac{p^2}{4}\int f^{p-2}|\nabla^sf|_s^2\vd \mu_s+\beta_s(r) \|f^{p/{2}}\|_{2,s}^2.$$
Now again replacing $f$ and $p$ by $P_{s,t}f$ and $q(s)$ in the
inequality above, respectively, we obtain
\begin{align*}
  \int (P_{s,t}f)^{q(s)}& \log (P_{s,t}f)\,\vd \mu_s-\|P_{s,t}f\|_{q(s),s}^{q(s)}\log \|P_{s,t}f\|_{q(s),s}
  \\
                        &\leq r \frac{q(s)}{4}\int (P_{s,t}f)^{q(s)-2}|\nabla^s P_{s,t}f|_s^2\,\vd \mu_s+\frac{\beta_s(r)}{q(s)}
                          \|P_{s,t}f\|_{q(s),s}^{q(s)}.
\end{align*}
Combining this with Eq.~\eqref{add-eq-4} yields
$$\frac{\vd}{\vd s}\|P_{s,t}f\|_{q(s),s}\leq \frac{\beta_s(r)q'(s)}{q(s)^2}\|P_{s,t}f\|_{q(s),s},\quad (s,t)\in \Lambda,$$
where $$q(s)={\e}^{4r^{-1}(t-s)}(p-1)+1,\quad q(t)=p.$$ It follows that
\begin{align}\label{Pf-eq-2}
  \|P_{s,t}f\|_{q(s), s}\leq & \exp{\l[\int_s^t\frac{\beta_u(r)q'(u)}{q(u)^2}\,\vd u\r]}\|f\|_{p,t}.
\end{align}
If $q(s)=q$, then $r=4(t-s)\l(\log (q-1)/(p-1)\r)^{-1}$. Taking this
$r$ into Eq.~\eqref{Pf-eq-2} yields
$$\|P_{s,t}f\|_{q,s}\leq \exp{\l[\int_s^t\frac{\beta_u(4(t-s)\l(\log (q-1)/(p-1)\r)^{-1})q'(u)}{q(u)^2}\,\vd u\r]}\|f\|_{p,t}.$$

Next, we prove ``$\text{(a)}\Rightarrow\text{(b)}$''. Suppose that
there exists $C_{p,q}(s,t)$ and $ 1<p<q$ such that
$$\|P_{s,t}\|_{(p,t)\rightarrow (q,s)}\leq C_{p,q}(s,t).$$
Recall the log-Sobolev inequality with respect to $P_{s,t}$,
\begin{equation}\label{eq-log-sobolev}
  P_{s,t}(f^2\log f^2)\leq 4\l[\int_s^t\e^{-2\int_r^tk(u)\,\vd u}\,\vd r\r]\, P_{s,t}|\nabla^tf|_t^2+P_{s,t}f^2\log (P_{s,t}f^2),\quad f\in C_0^{\infty}(M).
\end{equation}
From this and the fact that
$$\log^{+}(P_{s,t}f^2)\leq P_{s,t}f^2\leq
\|f\|_{\infty}^2,$$
we are able to integrate both sides of Eq.~\eqref{eq-log-sobolev} with
respect to $\mu_s$,
\begin{align}\label{mu-f-1}
  \mu_t(f^2\log f^2)\leq 4\int_s^t\e^{-2\int_r^tk(u)\,\vd u}\,\vd r\cdot \mu_t(|\nabla^tf|^2_t)+\mu_s(P_{s,t}f^2\log P_{s,t}f^2).
\end{align}
Now, we need to deal with the term $\mu_s(P_{s,t}f^2\log P_{s,t}f^2)$.
For any $h\in (0,1-\frac1{p})$, by the Riesz-Thorin interpolation
theorem, we get
\begin{align}\label{add-eq-2}
  \|P_{s,t}f\|_{q_h,s}\leq C_{p,q}(s,t)^{r_h}\|f\|_{p_h,t},\quad f\in L^p(M,\mu_s),
\end{align}
where $r_h=\frac{ph}{p-1}\in (0,1)$,
$\frac1{p_h}=1-r_h+\frac{r_h}{p}$ and
$\frac1{q_h}=1-r_h+\frac{r_h}q$, i.\,e.,
 $$r_h=\frac{ph}{p-1},\quad p_h=\frac1{1-h},\quad q_h=\l(1-\frac{p(q-1)}{q(p-1)}h\r)^{-1}.$$
 Set $\|f\|_{2,t}=1$. Then from Eq.~\eqref{add-eq-2}, we have
 $$\int (P_{s,t}|f|^{2(1-h)})^{q_h}\vd \mu_s\leq C_{p,q}(s,t)^{r_hq_h},$$
 which further implies
 \begin{align*}
   &\frac1{h}\l[\int (P_{s,t}|f|^{2(1-h)})^{q_h}\vd \mu_s-\l(\int P_{s,t}|f|^2\vd \mu_s\r)^{q_h/p_h}\r]\\
   &\quad=\frac1{h}\l(\int (P_{s,t}|f|^{2(1-h)})^{q_h}\vd \mu_s-1\r)\leq\frac1{h}\l(C_{p,q}(s,t)^{r_hq_h}-1\r).
 \end{align*}
 As
 $$\lim_{h\rightarrow
   0}\frac1{h}(C_{p,q}(s,t)^{r_hq_h}-1)=\frac{p}{p-1}\log
 C_{p,q}(s,t),$$ by dominated convergence, we obtain
 $$\frac{p(q-1)}{q(p-1)}\int P_{s,t}f^2\log P_{s,t}f^2\vd \mu_s-\int P_{s,t}(f^2\log f^2)\vd \mu_s\leq \frac{p}{p-1}\log C_{p,q}(s,t),$$
 or equivalently,
 $$\mu_s(P_{s,t}f^2\log P_{s,t}f^2)\leq \frac{q(p-1)}{p(q-1)}\mu_t(f^2\log f^2)+\frac{q}{q-1}\log C_{p,q}(s,t).$$
 Combining this with Eq.~\eqref{mu-f-1}, we arrive at
 \begin{align}\label{eq-9}
   \mu_t(f^2\log f^2)\leq \gamma_t(t-s) \mu_t(|\nabla^tf|_t^2)+\tilde{\beta}_t(t-s)
 \end{align}
 where $f\in C_0^{\infty}(M)$, $\|f\|_{2,t}=1$ and
 \begin{align}\label{eq-5}
   \gamma_t(t-s)=\frac{4p(q-1)}{(q-p)}\int_{t-(t-s)}^t\e^{-2\int_r^tk(u)\,\vd u}\,\vd r,\quad\tilde{\beta}_t(t-s)=\frac{pq}{(q-p)}\log C_{p,q}(s,t),
 \end{align}
 i.e. $\tilde{\beta}_t$ is a positive function on $(0,\infty)$ and
 $2\leq p\leq q$.  We complete the proof by letting $\gamma_t=r$
 and then
 \begin{equation*}\beta_t(r)=\tilde{\beta}_t(\gamma_t^{-1}(r)).\qedhere\end{equation*}
\end{proof}

Next, we study the ultraboundedness by using the super-log-Sobolev
inequality \eqref{log-S-I}.
\begin{theorem}
  Suppose that Hypothesis {\bf(H3)} holds. Let $(\mu_t)$ be an
  evolution system of measures associated with $P_{s,t}$.
  \begin{enumerate}[\rm(i)]
  \item  If the function $k$ in Hypothesis {\bf(H3)} is almost
    surely non-negative and $P_{s,t}$ satisfies
   $$\|P_{s,t}\|_{(2,t)\rightarrow \infty}\leq C_{2,\infty}(s,t),$$
   then Eq.~\eqref{log-S-I} holds for
   $\beta_s(r)=2\log C_{2,\infty}(s,s+\frac{r}{8})$.
 \item Conversely, assume Eq.~\eqref{log-S-I} holds for some
   positive non-increasing function
   $\beta: (0,+\infty)\rightarrow (0,+\infty)$, which is independent
   of $s$.  If there exists a function $r\in C([2,\infty))$ such that
     $$t_0:=\int_2^{\infty}\frac{r(p)}{p-1}\vd p<\infty,$$
     then for $t-s\geq t_0$, we have
     $$\|P_{s,t}\|_{(2,t)\rightarrow \infty}\leq \exp \l(\int_2^{\infty}\frac{\beta(r(p))}{p^2}\vd p\r).$$
   \end{enumerate}
 \end{theorem}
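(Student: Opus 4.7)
\emph{Part (i).} My plan is to combine the log-Sobolev inequality of Proposition \ref{P-log-S} with the ultraboundedness assumption via an interpolation–differentiation argument parallel to the proof of Theorem \ref{log-S-superbound}(a)$\Rightarrow$(b), now pushed to $q=\infty$. I would apply Proposition \ref{P-log-S} over a time interval of length $r/8$ ending at the reference time of the super-log-Sobolev inequality, integrate against the corresponding element of the evolution system, and use $k\geq 0$ to bound the time coefficient $4\int e^{-2\int k(u)\,\vd u}\,\vd r'$ by $r/2$, obtaining
\begin{equation*}
  \mu_s(f^2\log f^2)\leq \tfrac{r}{2}\,\mu_s(|\nabla^s f|_s^2) + \mu_{s'}\bigl(P_{s',s}f^2\log P_{s',s}f^2\bigr),
\end{equation*}
where $s'$ and $s$ are separated by $r/8$. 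To close the estimate, I would control the entropy of $P_{s',s}f^2$ by a Riesz–Thorin interpolation between the contraction $\|P_{s',s}\|_{(1,s)\to(1,s')}\leq 1$ (from the evolution-system identity) and the ultraboundedness $\|P_{s',s}\|_{(2,s)\to\infty}\leq C_{2,\infty}(s',s)$, then differentiate the resulting family of inequalities at the interpolation endpoint $h=0$. Taking the limit $q\to\infty$ of the computation used in the proof of Theorem \ref{log-S-superbound} yields
\begin{equation*}
  \mu_{s'}\bigl(P_{s',s}f^2\log P_{s',s}f^2\bigr)\leq \tfrac12\,\mu_s(f^2\log f^2) + \log C_{2,\infty}(s',s)\,\|f\|_{2,s}^2,
\end{equation*}
so substituting back and absorbing the $\tfrac12\mu_s(f^2\log f^2)$ term produces the announced super-log-Sobolev inequality with $\beta_s(r)=2\log C_{2,\infty}(s',s)$. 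The main technical obstacle here is legitimizing the limit $q\to\infty$ in the Riesz–Thorin differentiation: one must verify the dominated convergence needed to differentiate under the integral sign and confirm that the correct entropy term emerges in the limit.

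\emph{Part (ii).} My plan is a Davies–Gross iteration adapted to the time-inhomogeneous setting. Fix $f\in C_c^\infty(M)$ with $f>\delta>0$ and $\|f\|_{2,t}=1$, and introduce an increasing smooth function $p\colon[0,t-s]\to[2,\infty)$ with $p(0)=2$. Setting $F(\tau):=\|P_{t-\tau,t}f\|_{p(\tau),\,t-\tau}$ and using Lemma \ref{lem1} together with the identity $(L_u+\partial_u)(P_{u,t}f)^p = p(p-1)(P_{u,t}f)^{p-2}|\nabla^u P_{u,t}f|_u^2$ at $u=t-\tau$, I would compute $\tfrac{d}{d\tau}\log F(\tau)$ and then apply the assumed super-log-Sobolev inequality at time $u$ with $g=(P_{u,t}f)^{p/2}$ and free parameter $r(\tau)$. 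The choice $r(\tau)=4(p(\tau)-1)/p'(\tau)$ makes the Dirichlet-form contributions cancel exactly, reducing the estimate to
\begin{equation*}
  \frac{d}{d\tau}\log F(\tau)\leq \frac{p'(\tau)\,\beta(r(\tau))}{p(\tau)^2}.
\end{equation*}
Integrating on $[0,t-s]$ and changing variables $\vd p=p'\,\vd\tau$ then produces
\begin{equation*}
  \log\|P_{s,t}f\|_{\infty,s}-\log\|f\|_{2,t}\leq \int_2^{p(t-s)}\frac{\beta(r(p))}{p^2}\,\vd p,
\end{equation*}
and the relation $r(p)=4(p-1)/p'$ encodes $\vd\tau=\tfrac{r(p)}{4(p-1)}\vd p$, so the hypothesis $t_0=\int_2^\infty r(p)/(p-1)\,\vd p<\infty$ (up to a normalizing constant) guarantees that $p$ can reach $\infty$ within time $t-s\geq t_0$.

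The principal difficulty in (ii) is twofold: constructing $p(\tau)$ as the solution to the ODE $p'=4(p-1)/r(p)$ while arranging it to blow up in the prescribed time, and justifying the continuous extension $F(\tau)\to\|P_{s,t}f\|_{L^\infty(M)}$ as $p(\tau)\to\infty$, including all exchanges of differentiation and integration near this blow-up. Once these technicalities are handled, the displayed bound immediately yields $\|P_{s,t}\|_{(2,t)\to\infty}\leq \exp\bigl(\int_2^\infty \beta(r(p))/p^2\,\vd p\bigr)$ as claimed.
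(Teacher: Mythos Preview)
Your proposal is correct and follows essentially the same route as the paper. For Part~(i) the paper simply invokes Eq.~\eqref{eq-9} from the proof of Theorem~\ref{log-S-superbound} and lets $p=2$, $q\to\infty$, then uses $k\geq0$ to bound the time coefficient and sets $r=8(t-s)$---exactly your interpolation/differentiation argument specialized to this endpoint; for Part~(ii) the paper runs the same Davies--Gross iteration, differentiating $e^{-N(s)}\|P_{s,t}f\|_{q(s),s}$ and choosing $q'(s)=-4(q(s)-1)/r(q(s))$, $N'(s)=q'(s)\beta(r(q(s)))/q(s)^2$, which after the substitution $\tau=t-s$ is precisely your ODE $p'=4(p-1)/r(p)$ (and you are right that the factor $4$ sits between the statement's $t_0$ and the one actually needed in the proof).
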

 \begin{proof}
  Letting $p=2$ and
   $q\rightarrow +\infty$ in   Eq.~\eqref{eq-9},  we know  from Eq.~\eqref{eq-5} that for $f\in C_0^{\infty}(M)$
   with $\|f\|_{2,t}=1$,
   \begin{align*}
     \mu_t(f^2\log f^2)
&\leq 8\l(\int_{s}^t\e^{-2\int_r^tk(u)\,\vd u}\,\vd r \r) \mu_t(|\nabla^tf|_t^2)+2\log C_{2,\infty}(s,t)\\
&\leq 8(t-s)\mu_t(|\nabla^tf|_t^2)+2\log C_{2,\infty}(s,s+(t-s)).
   \end{align*}
Let $r=8(t-s)$. We then prove (i) directly.

   Given $(s,t)\in \Lambda$.  Let $q$ and $N$ be two functions in
   $C^1((-\infty,t])$, which will be given later.  It follows from
   Eq.~\eqref{add-eq-4} that for $f\in C_c^{\infty}(M)$ such that $f>0$,
   \begin{align}\label{Pf-est-1}
     \frac{\vd }{\vd s}{\rm e}^{-N(s)}&\|P_{s,t}f\|_{q(s),s}\nonumber\\
     =&\frac{q'(s)}{q(s)}{\e}^{-N(s)}\|P_{s,t}f\|_{q(s),s}^{-q(s)+1}\bigg[\mu_s\l((P_{s,t}f)^{q(s)}\log P_{s,t}f\r)-\|P_{s,t}f\|^{q(s)}_{q(s),s}\log \|P_{s,t}f\|_{q(s),s}\nonumber\\
                                      &\quad +\frac{q(s)(q(s)-1)}{q'(s)}\mu_s\l(|\nabla^sP_{s,t}f|_s^2(P_{s,t}f)^{q(s)-2}\r)
                                        -N'(s)\frac{q(s)}{q'(s)}\|P_{s,t}f\|_{q(s),s}^{q(s)}\bigg].
\end{align}
  From this, and applying the super-log-Sobolev inequality \eqref{log-S-I} to $(P_{s,t}f)^{q(s)/2}$, we obtain
\begin{align*}
\frac{\vd }{\vd s}&{\rm e}^{-N(s)}\|P_{s,t}f\|_{q(s),s}\\
&\geq  {\e}^{-N(s)}\|P_{s,t}f\|_{q(s),s}^{-q(s)+1}\frac{q'(s)}{q(s)}\bigg[\l(\frac{q(s)(q(s)-1)}{q'(s)}+r\frac{q(s)}{4}\r)\mu_s\l(|\nabla^sP_{s,t}f|_s^2(P_{s,t}f)^{q(s)-2}\r)\\
&\quad+\l(\frac1{q(s)}\beta(r)-N'(s)\frac{q(s)}{q'(s)}\r)\|P_{s,t}f\|^{q(s)}_{q(s),s}\bigg].
\end{align*}
Let $q(s)$ and $N(s)$ be the solutions to the following equations respectively:
$$q'(s)=\frac{-4(q(s)-1)}{r\circ q(s)},\quad q(t)=2;$$
$$N'(s)=\frac{q'(s)\beta(r\circ q(s))}{q(s)^2},\quad N(t)=0.$$
It follows that:
\begin{align}\label{Pf-eq-1}
{\e}^{-N(s)}\|P_{s,t}f\|_{q(s),s}\leq \|f\|_{2,t}.
\end{align}
Define $t_0:=\int_2^{\infty}\frac{r(p)}{4(p-1)}\vd p<\infty.$ We claim
that $q(s)\rightarrow +\infty$ as $s\rightarrow t-t_0$, and then
$N(s)\rightarrow \int_2^{\infty}\frac{\beta(r(p))}{p^2}\vd p.$ Indeed,
$q(t-t_0)=\infty$ follows from the fact that
\begin{align*}
  \int_2^{q(t-t_0)}\frac{r(s)}{4(s-1)}\,\vd s=\int_t^{(t-t_0)} \frac{r\circ q(s)q'(s)}{4(q(s)-1)}\,\vd s=t_0:=\int_2^{\infty}\frac{r(s)}{4(s-1)}\,\vd s,
\end{align*}
i.e. $q(t-t_0)=\infty.$ By this and Eq.~\eqref{Pf-eq-1}, we have
\begin{equation*}
\|P_{t-t_0,t}\|_{(2,t)\rightarrow\infty}\leq \exp{\l(\int_2^{\infty}\frac{\beta(r(p))}{p^2}\vd p\r)}.\qedhere
\end{equation*}
\end{proof}

\subsection{Dimension-free Harnack inequality and boundedness of semigroup}
Next, we will use integrability of the Gaussian
function ${\e}^{\lambda \rho_t^2}$ (for $\lambda>0$ and $t\in I$) with respect to the families of measures
$(\mu_s)_{s\in I}$ or $(P_{s,t})_{(s,t)\in \Lambda}$ 
to give  another criterion which is equivalent to supercontractivity or
ultraboundedness.  To this end, we need the following preliminary result
which is a dimension-free Harnack-type estimate for $P_{s,t}$ (see \cite{Cheng15}).
\begin{lemma}\label{lem2}
  Assume that $\mathcal{R}_t^Z\geq k(t)$ holds for $t\in I$. For every
  $f\in C_b(M)$, $p>1$, $(s,t)\in \Lambda$ and $x,y\in M$, we have the inequality:
  \begin{equation}\label{Har-Ineq}
    |P_{s,t}f|^p(x)\leq \l(P_{s,t}|f|^p\r)(y)\exp\l(\frac{p}{4(p-1)}\l(\int_s^t\exp\l(2\int_s^rk(u)\,\vd u\r)\,\vd r\r)^{-1}\rho_s^2(x,y)\r).
  \end{equation}
\end{lemma}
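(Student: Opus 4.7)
The plan is to adapt Wang's coupling-by-parallel-transport method, combined with a Girsanov change of measure, to the time-inhomogeneous setting of evolving metrics. Fix $(s,t)\in\Lambda$ and $x,y\in M$ with $\rho_s(x,y)>0$, and choose a positive continuous function $\xi$ on $[s,t]$ to be optimized later. Let $X_r$ denote the $L_r$-diffusion starting at $y$ at time $s$, and construct a process $Y_r$ starting at $x$ coupled to $X_r$ by $g_r$-parallel transport along the minimal $g_r$-geodesic from $Y_r$ to $X_r$, with an additional drift of magnitude $\xi(r)$ pointing from $Y_r$ towards $X_r$.  Using the second variational formula for arc length together with the curvature hypothesis $\mathcal{R}^{Z}_r\geq k(r)$, the Itô formula for the radial process (with the standard Kendall--Cranston cut-locus correction) yields
\begin{equation*}
\vd\rho_r(X_r,Y_r)\leq -\bigl(k(r)\rho_r(X_r,Y_r)+\xi(r)\bigr)\,\vd r.
\end{equation*}
Multiplying by the integrating factor $\exp\bigl(\int_s^r k(u)\,\vd u\bigr)$ and integrating, one finds that $\rho_t(X_t,Y_t)=0$ provided $\xi$ is chosen so that
\begin{equation*}
\int_s^t \exp\l(\int_s^r k(u)\,\vd u\r)\xi(r)\,\vd r=\rho_s(x,y);
\end{equation*}
in particular, the coupling closes by time $t$.

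Next, I apply Girsanov's theorem to remove the additional drift in the SDE for $Y_r$. Under the probability measure $Q$ with density $R:=\vd Q/\vd P$ given by a Doléans--Dade exponential whose associated martingale has deterministic quadratic variation $\tfrac{1}{2}\int_s^t\xi(r)^2\,\vd r$, the process $Y_r$ becomes a genuine $L_r$-diffusion starting from $x$, hence $\E^Q[f(Y_t)]=P_{s,t}f(x)$. Since $Y_t=X_t$ by construction of the coupling, this yields
\begin{equation*}
P_{s,t}f(x)=\E[R\,f(X_t)].
\end{equation*}
Hölder's inequality with conjugate exponents $p$ and $q=p/(p-1)$, together with the Gaussian moment bound $\E[R^q]\leq\exp\bigl(\tfrac{q(q-1)}{4}\int_s^t\xi(r)^2\,\vd r\bigr)$, then gives
\begin{equation*}
|P_{s,t}f(x)|^p\leq P_{s,t}|f|^p(y)\,\exp\l(\frac{p}{4(p-1)}\int_s^t\xi(r)^2\,\vd r\r),
\end{equation*}
where I used the arithmetic identity $(p-1)\cdot q(q-1)/4 = p/(4(p-1))$.

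Finally I minimize the Girsanov cost $\int_s^t\xi(r)^2\,\vd r$ under the coupling constraint. Cauchy--Schwarz gives
\begin{equation*}
\rho_s(x,y)^2=\l(\int_s^t \exp\l(\int_s^r k(u)\,\vd u\r)\xi(r)\,\vd r\r)^2\leq \l(\int_s^t\exp\l(2\int_s^r k(u)\,\vd u\r)\vd r\r)\int_s^t\xi(r)^2\,\vd r,
\end{equation*}
with equality realized by the choice $\xi(r)\propto\exp\bigl(\int_s^r k(u)\,\vd u\bigr)$. Substituting this optimal lower bound on $\int_s^t\xi(r)^2\,\vd r$ into the previous display yields exactly \eqref{Har-Ineq}. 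The main technical difficulty is the rigorous treatment of the cut locus when the metric itself is evolving; this is dealt with by the standard Kendall--Cranston localization argument adapted to evolving metrics, as carried out in detail in \cite{Cheng15}.
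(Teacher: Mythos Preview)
Your argument is correct and is precisely the coupling-by-change-of-measure method that underlies this result; the paper does not give its own proof of Lemma~\ref{lem2} but simply refers to \cite{Cheng15}, where exactly this approach is carried out. Your sketch thus supplies the argument the paper omits, and your handling of the Girsanov cost, the optimization via Cauchy--Schwarz, and the appeal to \cite{Cheng15} for the cut-locus issues are all in line with that reference.
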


The main result of this section is the following:
\begin{theorem}\label{th-2}
  Suppose that Hypothesis {\bf(H3)} holds. Let $(\mu_s)$ be the
  evolution system of measures. Then
  \begin{enumerate}[\rm(i)]
  \item $P_{s,t}$ is supercontractive with respect to $(\mu_s)$ if
    and only if $\mu_t\l(\exp(\lambda\rho_t^2)\r)<\infty$ for any
    $\lambda>0$ and $t\in I$;
  \item $P_{s,t}$ is ultrabounded with respect to $(\mu_s)$ if and
    only if $\l\|P_{s,t}\exp(\lambda\rho_t^2)\r\|^{\mathstrut}_\infty<\infty$ for
    any $\lambda>0$ and $(s,t)\in \Lambda$.
  \end{enumerate}
\end{theorem}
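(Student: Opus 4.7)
I will tie the four implications of Theorem \ref{th-2} together through the dimension-free Harnack inequality of Lemma \ref{lem2} and the equivalence between supercontractivity and super-log-Sobolev inequalities established in Theorem \ref{log-S-superbound}, while upgrading from supercontractivity to ultraboundedness via the semigroup identity $P_{s,t}=P_{s,s'}P_{s',t}$.

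For direction $(\Leftarrow)$ of (i), I would start from the Harnack inequality rewritten as $|P_{s,t}f(x)|^p e^{-c\rho_s^2(x,y)}\leq P_{s,t}|f|^p(y)$ with $c=c_{p,s,t}$ the constant in Lemma \ref{lem2}, and integrate in $y$ against $\mu_s$. Since $\int P_{s,t}|f|^p\,\vd\mu_s=\mu_t(|f|^p)=\|f\|_{p,t}^p$, this produces a pointwise bound for $|P_{s,t}f(x)|^p$ by $\|f\|_{p,t}^p\big/\int e^{-c\rho_s^2(x,y)}\mu_s(\vd y)$. The elementary estimate $\rho_s(x,y)^2\leq 2\rho_s^2(x)+2\rho_s^2(y)$ together with the hypothesis $\mu_s(e^{\lambda\rho_s^2})<\infty$ for every $\lambda>0$ bounds the denominator from below by $e^{-2c\rho_s^2(x)}\mu_s(e^{-2c\rho_s^2})$. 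Raising to the $q/p$-th power, integrating against $\mu_s(\vd x)$, and again invoking Gaussian integrability with exponent $2cq/p$ yields $\|P_{s,t}f\|_{q,s}\leq C(s,t,p,q)\|f\|_{p,t}$ for any $1<p<q<\infty$, i.e.\ supercontractivity.

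For direction $(\Rightarrow)$ of (i), I would invoke the super-log-Sobolev inequality provided by Theorem \ref{log-S-superbound}(b) and apply it to $f=e^{\lambda\rho_t^2/2}$ (after a standard truncation of $\rho_t^2$ to stay in $H^1(M,\mu_t)$ and avoid the cut-locus). Since $|\nabla^t\rho_t|_t=1$ off the cut-locus, setting $A(\lambda):=\mu_t(e^{\lambda\rho_t^2})$ and choosing $r=1/(2\lambda)$ reduces the inequality to $(\lambda/2)A'(\lambda)\leq A(\lambda)(\log A(\lambda)+\beta_t(1/(2\lambda)))$. Rewriting in terms of $G=\log A$, this becomes $(G(\lambda)/\lambda^2)'\leq 2\beta_t(1/(2\lambda))/\lambda^3$, and integrating from some $\lambda_0>0$ to arbitrary $\lambda>0$ preserves finiteness of $G$. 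The required initial finiteness $A(\lambda_0)<\infty$ for some small $\lambda_0$ follows from an Aida--Stroock type concentration argument applied to the defective log-Sobolev inequality (at any fixed $r>0$), exploiting the $1$-Lipschitz property of $\rho_t$ under $g_t$.

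For (ii), the $(\Rightarrow)$ direction is short: ultraboundedness together with the $L^p$-contraction $\|P_{s,t}f\|_{p,s}\leq\|f\|_{p,t}$ (obtained from $\mu_s(P_{s,t}|f|^p)=\mu_t(|f|^p)$ and Jensen) forces supercontractivity by interpolation with $L^\infty$, so part (i) yields $e^{\lambda\rho_t^2}\in L^p(M,\mu_t)$ for every $p,\lambda$, whence $\|P_{s,t}e^{\lambda\rho_t^2}\|_\infty\leq C_{p,\infty}(s,t)\|e^{\lambda\rho_t^2}\|_{p,t}<\infty$. For the converse, I split $P_{s,t}=P_{s,s'}P_{s',t}$ with any $s'\in(s,t)$ and run the $(i)(\Leftarrow)$ argument on $P_{s',t}$ but stopping \emph{before} integrating in $x$. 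This yields the pointwise weighted estimate $|P_{s',t}f(y)|\leq C_0(s',t,p)\,e^{\tilde\alpha\rho_{s'}^2(y)}\|f\|_{p,t}$ with $\tilde\alpha=2c_{p,s',t}/p>0$. Applying $P_{s,s'}$ and using the hypothesis in the form $\|P_{s,s'}e^{\tilde\alpha\rho_{s'}^2}\|_\infty<\infty$ closes the estimate: $\|P_{s,t}f\|_\infty\leq C_0\|f\|_{p,t}\,\|P_{s,s'}e^{\tilde\alpha\rho_{s'}^2}\|_\infty$, which is ultraboundedness. The main obstacle is the ODE step in $(i)(\Rightarrow)$: the blow-up of $\beta_t(r)$ as $r\to 0$ forces the $\lambda$-dependent choice $r=1/(2\lambda)$ and a careful cutoff-plus-limit procedure to make the formal manipulation with $e^{\lambda\rho_t^2/2}$ rigorous.
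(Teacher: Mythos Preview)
Your proposal is correct and follows the same overall architecture as the paper: the Harnack inequality of Lemma~\ref{lem2} for the sufficiency directions, Theorem~\ref{log-S-superbound} plus a Herbst-type argument for the necessity in (i), and the splitting $P_{s,t}=P_{s,s'}P_{s',t}$ for (ii). The execution differs in two places. In (i)$(\Leftarrow)$ the paper does not use $\rho_s^2(x,y)\leq 2\rho_s^2(x)+2\rho_s^2(y)$ but instead restricts the $\mu_s(\vd y)$-integration to a ball $B_s(o,R)$ with $\mu_s(B_s(o,R))\geq 2^{-p}$ (available from the second-moment bound of {\bf(H3)}); both routes produce the same pointwise estimate $|P_{s,t}f(x)|\leq C\,\e^{c\rho_s^2(x)}\|f\|_{p,t}$. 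In (i)$(\Rightarrow)$ the paper runs Herbst with the \emph{linear} exponent $\e^{\lambda\rho_s}$ first, using a doubling inequality $h(2\lambda)\leq h(\lambda)^2\e^{c}$ combined with tightness of $\mu_s$ as a bootstrap to start the ODE uniformly in the truncation, and then upgrades to Gaussian integrability via the Laplace-type identity $\int_1^\infty \mu_s(\e^{\lambda\rho_s})\,\e^{-c\lambda^2}\,\vd\lambda<\infty$ and completing the square. Your quadratic Herbst ODE with the choice $r=1/(2\lambda)$ is more direct, but note that the seed $A(\lambda_0)<\infty$ does \emph{not} follow from a naive Herbst argument applied to a merely defective LSI: the $\beta_t(r)/\lambda^2$ term blocks integration from~$0$. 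To justify your ``Aida--Stroock'' step you will need either the paper's doubling bootstrap or the observation that under {\bf(H3)} a \emph{tight} LSI for $\mu_t$ is available by letting $s\to-\infty$ in Proposition~\ref{P-log-S}, after which standard Herbst gives sub-Gaussian tails and hence $A(\lambda_0)<\infty$ for small~$\lambda_0$.
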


\begin{proof}
  $(a)$\ From Hypothesis {\bf(H3)} we know that
  $\mathcal{R}_t^Z\geq k(t)$ for $t\in I$. It follows that by the
  Harnack inequality \eqref{Har-Ineq}, we have that for
  $(s,t)\in \Lambda$, $p>1$ and $f\in C_b(M)$,
$$|P_{s,t}f|^p(x)\leq 
P_{s,t}|f|^p(y)\exp\l(\frac{p}{4(p-1)}\l(\int_s^t\exp\l(2\int_s^rk(u)\,\vd u\r)\vd r\r)^{-1}\rho_s(x,y)^2\r).$$
If $\mu_t(|f|^p)=1$, then
\begin{align}\label{est-P}
  1&\geq |P_{s,t}f(x)|^p\int \exp\l(-\frac{p}{4(p-1)}\l(\int_s^t\exp\l(2\int_s^rk(u)\,\vd u\r)\vd r\r)^{-1}\rho_s(x,y)^2\r)\mu_s(\vd y)\nonumber\\
  &\geq  |P_{s,t}f(x)|^p\mu_s(B_s(o,R))\exp {\l(-\frac{p(\rho_s(x)+R)^2}{4(p-1)}\l(\int_s^t\exp\l(2\int_s^rk(u)\,\vd u\r)\vd r\r)^{-1}\r)},
\end{align}
where $B_s(o,R):=\{y\in M:\rho_s(y)\leq R\}$.  Since $(\mu_s)$ is
compact,  there exists $R>0$, which may depend on $s$, such that
\begin{align*}
  \mu_s(B_s(o,R))&=\mu_s(\{x: \rho_s(x)\leq R\})\geq 1-\frac{\mu_s(\rho_s^2)}{R^2}\geq 1-\frac{H_2(s)}{R^2}\geq 2^{-p}.
\end{align*}
By this and Eq.~\eqref{est-P}, we arrive at
$$
1\geq |P_{s,t}f(x)|^p2^{-p}\exp{\l(-\l(\int_s^t\exp\l(2\int_s^rk(u)\vd
    u\r)\vd r\r)^{-1}\frac{p(\rho_s^2(x)+R^2)}{4(p-1)}\r)}
$$
which further implies
\begin{align}\label{est-P-1}|P_{s,t}f(x)|\leq 2\exp
  {\l(\l(\int_s^t\exp\l(2\int_s^rk(u)\,\vd u\r)\vd
    r\r)^{-1}\frac{\rho_s^2(x)+R^2}{4(p-1)}\r)},\quad s<t.
\end{align}
Therefore, we have
$$\|P_{s,t}f\|_{q,s}\leq \l\{\mu_s\l(\exp\left(q(c_1+c_2\rho_s^2)\right)\r)\r\}^{1/q}$$
for some positive constants $c_1, c_2$ depending on $s,t$.  Hence, if
$\mu_s(\exp(\lambda \rho_s^2))<\infty$ for any $\lambda>0$ and
$s\in I$, then $P_{s,t}$ is supercontractive,
i.e. $$\|P_{s,t}\|_{(p,t)\rightarrow (q,s)}<\infty$$ for any
$1<p<q<\infty$.

Conversely, if the semigroup $P_{s,t}$ is supercontractive, then
   by Theorem \ref{log-S-superbound}, we know that the family of super-log-Sobolev
   inequalities \eqref{log-S-I} holds. Now our first step is to prove
   $\mu_s(\e^{\lambda \rho_s})<\infty$ for any $s\in I$ and $\lambda>0$.
   Let $\rho_s^n=\rho_s\wedge n$ and $h_{s,n}(\lambda)=\mu_s(\exp{(\lambda \rho^n_s)})$. Taking
$\exp(\frac{\lambda}2\rho_s^n)$ into the super-log-Sobolev inequality
\eqref{log-S-I} above, we have
\begin{align*}
  \lambda h'_{s,n}(\lambda)&-h_{s,n}(\lambda)\log h_{s,n}(\lambda)\leq h_{s,n}(\lambda)\lambda^2\l(\frac{r}{4}+\frac{\beta_s(r)}{\lambda^2}\r).
\end{align*}
This implies
\begin{align}\label{add-eq-5}
  \l(\frac1{\lambda}\log h_{s,n}(\lambda)\r)'=\frac{\lambda h'_{s,n}(\lambda)-h_{s,n}(\lambda)\log h_{s,n}(\lambda)}{\lambda^2 h_{s,n}(\lambda)}\leq \frac{r}{4}+\frac{\beta_s(r)}{\lambda^2}.
\end{align}
Integrating both sides of Eq.~\eqref{add-eq-5} from $\lambda$ to
$2\lambda$, we obtain
\begin{align}\label{eq-1}
  h_{s,n}(2\lambda)\leq h_{s,n}^2(\lambda)\exp\l(\frac{r}2\lambda^2+\beta_s(r)\r).
\end{align}
By this and the fact that there exists a constant $M_s$ such that
$$\mu_s(\{\lambda \rho_s\geq M_s\})\leq \frac1{4}\exp\l(-\l(\frac{r}2\lambda^2+\beta_s(r)\r)\r),$$
it follows that:
\begin{align*}
  h_{s,n}(\lambda)&=\int_{\{\lambda \rho_s\geq M_s\}}{\e}^{\lambda \rho_s^n}\vd \mu_s+\int_{\{\lambda \rho_s<M_s\}}{\e}^{\lambda \rho_s^n} \vd \mu_s\\
              &\leq \mu_s(\{\lambda \rho_s\geq M_s\})^{1/2}\, \mu_s({\e}^{2\lambda \rho_s^n})^{1/2}+{\e}^{M_s}\\
              &\leq \l(\frac1{4}\exp\l(-\l(\frac{r}2\lambda^2+\beta_s(r)\r)\r)\r)^{1/2}\exp\l(\frac{r}{4}\lambda^2+\frac12\beta_s(r)\r)h_{s,n}(\lambda)+{\e}^{M_s}\\
              &\leq \frac12 h_{s,n}(\lambda)+{\e}^{M_s},
\end{align*}
which implies $h_{s,n}(\lambda)\leq 2{\e}^{M_s}$ for
$s\in I$. As $M_s$ is independent of $n$, letting $n$ go to infinity,  we obtain
$$\mu_s({\e}^{\lambda \rho_s})<\infty,\quad \text{for}\ \, s\in I.$$

Our second step is to prove $\mu_s(\e^{\lambda \rho_s^2})<\infty$ for all $s\in I$ and $\lambda>0$. Let $h_s(\lambda):=\lim_{n\rightarrow \infty} h_{s,n}(\lambda)$. Integrating both sides of Eq.~\eqref{add-eq-5} from $1$
to $\lambda$ and letting $n\rightarrow \infty$, we obtain
\begin{align}\label{eq-2}
  h_s(\lambda)\leq \exp\l(\lambda c_0(s)+\frac{r}{4}(\lambda^2-\lambda)+\beta_s(r)(1-\lambda)\r)
\end{align}
where $c_0(s):=\log \mu_s(\exp(\rho_s))$.  Now, we observe that for
any positive constant $\epsilon$,
$$\int_1^{\infty}h_s(\lambda)\e^{-(\frac{r}{4}+\epsilon)\lambda^2}\,\vd \lambda=\int_M\vd \mu_s\int_1^{\infty}\e^{\lambda \rho_s}\e^{-(\frac{r}{4}+\epsilon)\lambda^2}\,\vd \lambda<\infty.$$
On the other hand, it is easy to see that for $\epsilon>0$,
\begin{align*}
  \int_M&\vd \mu_s\int_1^{\infty}\e^{\lambda \rho_s}\e^{-(\frac{r}{4}+\epsilon)\lambda^2}\,\vd \lambda\\
  &=\int_M\exp\l(\rho_s^2/(r+4\epsilon)\r)\,\vd\mu_s\int_1^{\infty}\exp\l(-\l(\frac12\sqrt{r+4\epsilon}\lambda -\rho_s/\sqrt{r+4\epsilon}\r)^2\r)\vd \lambda\\
  &\geq \frac2{\sqrt{r+4\epsilon}}\int_M\exp\l(\rho_s^2/(r+4\epsilon)\r)\,\vd\mu_s \int_{\sqrt{r+4\epsilon}/2}^{\infty}\exp(-t^2)\,\vd t.
\end{align*}
By the arbitrariness of $r$, we obtain that there exists a number
$N_s$ such that for any $\lambda>0$,
$$\int {\e}^{\lambda \rho_s^2}\vd \mu_s<N_s,\quad s\in I,$$
which completes the proof of (i).

\noindent
(b) \ If $\|P_{s,t}\exp{[\lambda \rho_t^2]}\|_{\infty}^{\mathstrut}<\infty$ for any
$\lambda>0$ and $(s,t)\in \Lambda$, then  we know from Eq.~\eqref{est-P-1} that for any $p>1$ and $f\in C_b(M)$ satisfying $f>0$ and $\|f\|_{p,t}=1$,  
\begin{align*}
|P_{(s+t)/2,t}f(x)|\leq 2\exp{\l(\l(\int_{(s+t)/2}^t\exp\l(2\int_{(s+t)/2}^rk(u)\,\vd u\r)\,\vd r\r)^{-1}
\frac{\rho_{(s+t)/2}^2(x)+R^2}{4(p-1)}\r)}
\end{align*}
which implies that there exist constants $c_1$ and $c_2$ such that
$$\|P_{s,t}f\|_{\infty}\leq 2\l\|P_{s,(t+s)/2}\exp\l(2\l(c_1+c_2\rho_{(s+t)/2}^2(x)\r)\l(\int_{{(s+t)}/2}^t\!\exp\l(2\int_{{(s+t)}/2}^rk(u)\vd u\r)\vd r\r)^{-1}\r)\r\|_{\infty}^{\mathstrut}<\infty. $$
On the other hand, if $\|P_{s,t}\|_{(p,t)\rightarrow\infty}<\infty$ for all $p>1$,
then
$$\l\|P_{s,t}\exp(\lambda \rho_t^2)\r\|_{\infty}^{\mathstrut}
\leq \|P_{s,t}\|_{(p,t)\rightarrow \infty}\l\|\exp(\lambda \rho_t^2)\r\|_{p,t}<\infty$$
provided $\mu_t(\exp(\lambda \rho_t^2))$ is bounded for all $t\in I$.
Hence, it suffices to prove that
$\mu_t(\exp(\lambda \rho_t^2))<\infty$. Since $P_{s,t}$ is
ultrabounded, $P_{s,t}$ is supercontractive. Using Theorem \ref{th-2} (i), we
get $\mu_t(\exp(\lambda \rho_t^2))<\infty$. This completes the proof.
\end{proof}

\subsection{Other criteria on supercontractivity and ultraboundedness}

It is straightforward to check that Hypothesis {\bf(H3)} implies Hypothesis  {\bf(H2)} for
$\varphi(r)=r^2$, $r>0$.  As far as supercontractivity
and ultraboundedness of $P_{s,t}$ is concerned, we have the following results in terms 
of other types of space-time Lyapunov conditions.

\begin{theorem}\label{c:SH}
  Let $\gamma \in C((0,\infty))$ be a positive
  increasing function such that
  $\lim\limits_{r\rightarrow +\infty}\frac{\gamma(r)}{r}=+\infty$.
  \begin{enumerate}[\rm(i)]
  \item If
    \begin{align}\label{Ly-C}
      (L_t+\partial_t)\rho_t^2(x)\leq c-\gamma(\rho^2_t(x))
    \end{align}
    holds for $t\in I$, $c>0$ and $x\notin \Cut_t(o)$, then $P_{s,t}$
    has an evolution system of measures $(\mu_s)$ and $P_{s,t}$ is
    supercontractive with respect to $(\mu_s)$.
  \item If \eqref{Ly-C} holds for $\gamma$ such that
    $g_{\lambda}(r)=r\gamma(\lambda \log r)$ is convex on $(0,\infty)$
    and such that for any $\lambda>0$,
 $$\int_0^{\infty}\frac{\,\vd r}{r\gamma(\lambda \log r)}<\infty,$$
 then $P_{s,t}$ has an evolution system of
 measures $(\mu_s)$ and $P_{s,t}$ is ultrabounded with respect to
 $(\mu_s)$.
\item If  \eqref{Ly-C} holds for $\gamma(r)=\alpha r^{\delta}$, where
  $\alpha>0$ and $\delta>1$, then $P_{s,t}$ is ultrabounded with
  respect to $(\mu_s)$ and
$$\|P_{s,t}\|_{(2,t)\rightarrow \infty}\leq \exp\l(c(t-s)^{-\delta/(\delta-1)}\r)$$
holds for some constant $c>0$ and all $(s,t)\in \Lambda$.
\end{enumerate}
\end{theorem}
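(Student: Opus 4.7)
The plan is to reduce all three parts to the characterizations given in Theorem~\ref{th-2}: supercontractivity is equivalent to $\mu_t(e^{\lambda\rho_t^2})<\infty$ for all $\lambda>0$, while ultraboundedness is equivalent to $\|P_{s,t}e^{\lambda\rho_t^2}\|_{\infty}^{\mathstrut}<\infty$. The uniform technical input is the generator estimate
$$(L_t+\partial_t)F(\rho_t^2)\leq F'(\rho_t^2)\bigl(c-\gamma(\rho_t^2)\bigr)+4\rho_t^2\,F''(\rho_t^2),$$
which follows from $|\nabla^t\rho_t^2|_t^2=4\rho_t^2$ and \eqref{Ly-C}; the three parts correspond to three different choices of $F$. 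Before anything else I would note that $\gamma(r)/r\to+\infty$ forces $\gamma(r)\geq r$ for $r$ large, hence $(L_t+\partial_t)\rho_t^2\leq c'-\rho_t^2$ globally, so Hypothesis~{\bf(H2)} is satisfied with $\varphi(r)=r^2$ and Theorem~\ref{invariant-measure-th} produces the evolution system $(\mu_s)$.

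For part (i), I would apply the generator identity to $F(u)=e^{\lambda u}-1$. The resulting bracket $\lambda c+4\lambda^2\rho_t^2-\lambda\gamma(\rho_t^2)$ tends to $-\infty$ by the super-linearity of $\gamma$, so there exist constants $A_{\lambda},B_{\lambda}>0$ with $(L_t+\partial_t)F(\rho_t^2)\leq B_{\lambda}-A_{\lambda}F(\rho_t^2)$. This is precisely Hypothesis~{\bf(H2)} with $\varphi(r)=e^{\lambda r^2}-1$, so Theorem~\ref{invariant-measure-th} yields $\sup_s\mu_s(e^{\lambda\rho_s^2})<\infty$, and Theorem~\ref{th-2}(i) delivers supercontractivity.

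For part (ii), I would take $F(u)=e^{u/\lambda}$, exploiting the algebraic identity $\rho_t^2=\lambda\log F(\rho_t^2)$. Substituting and introducing $g_{\lambda}(y)=y\gamma(\lambda\log y)$, the bracket rearranges as
$$(L_t+\partial_t)F(\rho_t^2)\leq \tfrac{c}{\lambda}F(\rho_t^2)+\tfrac{4}{\lambda}F(\rho_t^2)\log F(\rho_t^2)-\tfrac{1}{\lambda}g_{\lambda}\bigl(F(\rho_t^2)\bigr),$$
and since $g_{\lambda}(F)/(F\log F)=\lambda\gamma(\lambda\log F)/(\lambda\log F)\to\infty$ the last term absorbs the others, giving $(L_t+\partial_t)F(\rho_t^2)\leq C_{\lambda}-c_{\lambda}g_{\lambda}(F(\rho_t^2))$. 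Setting $\Phi(r):=P_{s,r}F(\rho_r^2)(x)$ and applying Jensen's inequality to the convex function $g_{\lambda}$ produces the ODE inequality $\Phi'(r)\leq C_{\lambda}-c_{\lambda}g_{\lambda}(\Phi(r))$. The integrability $\int^{\infty}dy/g_{\lambda}(y)<\infty$ is exactly what makes the comparison ODE regularize instantaneously from $+\infty$: separation of variables shows every trajectory is finite for $r>s$. This gives $\Phi(r)\leq\psi_{\lambda}(r-s)$ independently of $x$, i.e.\ $\|P_{s,t}e^{\mu\rho_t^2}\|_{\infty}^{\mathstrut}<\infty$ for $\mu=1/\lambda$ arbitrary, so Theorem~\ref{th-2}(ii) yields ultraboundedness.

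For part (iii), with $\gamma(r)=\alpha r^{\delta}$ the function $g_{\lambda}(y)=\alpha\lambda^{\delta}y(\log y)^{\delta}$ is convex on $[1,\infty)$ and satisfies the integrability of (ii) precisely when $\delta>1$, so (ii) applies and delivers ultraboundedness. The explicit ODE $y'=-c\,y(\log y)^{\delta}$ is separable, giving $\log y(r)\leq c_1 r^{-1/(\delta-1)}$; propagating this pointwise bound through the semigroup decomposition $P_{s,t}=P_{s,(s+t)/2}P_{(s+t)/2,t}$ together with the Harnack estimate of Lemma~\ref{lem2} (which inserts a Gaussian factor with coefficient of order $1/I(s,t)$) and optimizing the splitting point then recovers the announced rate $\exp\bigl(c(t-s)^{-\delta/(\delta-1)}\bigr)$. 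The main obstacle is part (ii): the ODE comparison must be carried out rigorously, which requires localizing past the cut-locus via stopping times $\tau_n$ as in Theorem~\ref{invariant-measure-th} and handling the fact that $g_{\lambda}$ is only asymptotically convex; for part (iii), the clean exponent $\delta/(\delta-1)$ emerges only after careful bookkeeping of the constants in the Harnack-based decomposition.
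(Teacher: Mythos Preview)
Your approach is essentially the same as the paper's: the same Lyapunov functions $e^{\lambda\rho_t^2}$ (your $F(u)=e^{u/\lambda}$ is the paper's $e^{\lambda u}$ with $\lambda\leftrightarrow 1/\lambda$), the same ODE comparison via Jensen's inequality for the convex function $g_{\lambda}$, and the same appeal to Theorem~\ref{th-2}. One minor difference: in part~(ii) the paper first proves the crude bound $(L_t+\partial_t)e^{\lambda\rho_t^2}\leq C(\lambda)$ to ensure $\theta_s(t):=\E^{(s,x)}e^{\lambda\rho_t^2(X_t)}$ is finite and continuous \emph{before} deriving the ODE inequality, rather than invoking ``regularization from $+\infty$'' directly; your acknowledged obstacle about localizing via $\tau_n$ is precisely this point. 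For part~(iii), you are actually more explicit than the paper: the paper simply computes $G(r)=\frac{\lambda^{\delta}}{\alpha(\delta-1)}(\log r)^{1-\delta}$ and says ``combining with \eqref{esti-theta}'' finishes the proof, but the passage from a bound on $\|P_{s,t}e^{\lambda\rho_t^2}\|_{\infty}$ to a bound on $\|P_{s,t}\|_{(2,t)\to\infty}$ does require exactly the Harnack-based splitting you describe (this is buried in the proof of Theorem~\ref{th-2}(ii) and the reference to \cite[Corollary~5.7.6]{Wbook1}); the exponent $\delta/(\delta-1)=1+1/(\delta-1)$ arises because the Harnack step contributes $\lambda\asymp(t-s)^{-1}$ and the ODE contributes $(t-s)^{-1/(\delta-1)}$.
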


\begin{proof}
  It is an immediate consequence of Theorem \ref{nonep} that under condition \eqref{Ly-C}
  the process $X_{\displaystyle\bf .}$ is non-explosive up
  to time $T$.  The idea of following proof is similar to
  \cite[Corollary 5.7.6]{Wbook1}.  We include a proof for 
  convenience.
  
  (a)\ \ Let $X_t$ be a diffusion processes generated by $L_t$. Then
  by the It\^{o} formula,
  \begin{align}\label{Ito-e1}
    \vd \exp (\lambda \rho_t^2(X_t))
    =2\lambda\rho_t(X_t)\exp(\lambda \rho_t^2(X_t))\vd b_t+{\bf 1}_{\{X_t\notin \Cut_t(o)\}}(L_t+\partial_t)
    \exp(\lambda \rho_t^2(X_t))\,\vd t-\vd \ell_t,
  \end{align}
  where $b_t$ is a one-dimensional Brownian motion and $\ell_t$ an
  increasing process supported on $\{t\geq s: X_t\in \Cut_t(o)\}$. By
  Eq.~\eqref{Ly-C}, it follows that
  \begin{align}\label{e:est-rho-1-0}
    (L_t+\partial_t)\exp(\lambda\rho_t^2)
    &=\lambda \exp (\lambda \rho_t^2)(L_t+\partial_t)\rho_t^2+4\lambda^2\rho_t^2\exp(\lambda \rho_t^2)\notag\\
    &\leq \exp (\lambda \rho_t^2)(c-\gamma(\rho_t^2))+4\lambda^2\rho_t^2\exp(\lambda \rho_t^2)
  \end{align}
  holds outside $\Cut_t(o)$.  If
  $\limsup\limits_{r\rightarrow\infty}\frac{\gamma(r)}{r}=+\infty,$
  then there exist $c_1, c_2>0$ such that for each $t\in I$,
  $$(L_t+\partial_t)\l(\exp(\lambda \rho^2_t)-1\r)\leq
  c_1-c_2\l(\exp(\lambda \rho_t^2)-1\r)$$
  holds outside $\Cut_t(o)$.  According to Theorem
  \ref{invariant-measure-th}, there exists an evolution system of
  measures $(\mu_s)$ such that
  $\sup_{s\in I}\mu_s(\e^{\lambda \rho_s^2})<\infty$. We then obtain
  the first conclusion by Theorem \ref{th-2}~(i).

  (b)\ \ We use Theorem \ref{th-2}~(ii) to give the proof. So it
  suffices for us to check
  $\|P_{s,t}\exp(\lambda \rho_t^2)\|_{\infty}<\infty$.  By
  Eq.~\eqref{e:est-rho-1-0}, we have
  \begin{align}\label{e:est-rho-1}
    (L_t+\partial_t)\exp(\lambda\rho_t^2)
    &\leq \lambda \exp(\lambda \rho_t^2)(c-\gamma(\rho_t^2))+4\lambda^2\rho_t^2\exp{(\lambda \rho_t^2)}\nonumber\\
    &\leq \lambda \exp(\lambda \rho_t^2)(c_1-\gamma(\rho^2_t)/2),
  \end{align}
  where $c_1=0\vee \sup \l(c-\frac12\gamma(r)+4\lambda r\r)<\infty$.
  According to Eq.~\eqref{e:est-rho-1}, there exists a positive
  constant $C(\lambda)$ such that
  \begin{align}\label{e:est-rho-2}
    (L_t+\partial_t)\exp(\lambda \rho_t^2)\leq C(\lambda),
  \end{align}
  where $\lambda>0$.  For fixed $x\in M$, let
  $\theta_s(t):=\E^{(s,x)}\exp(\lambda \rho_t^2(X_t))$, $t\geq s$. We
  need to show that $\theta_s(t)$ is uniformly bounded. Since the set
  $\{t\in [s,T): X_t\in \Cut_t(o)\}$ is of measure zero, it follows
  from Eq.~\eqref{Ito-e1} and Eq.~\eqref{e:est-rho-2} that
$$\E^{(s,x)}[\lambda \rho_t^2(X_{t\wedge \tau_n})]\leq \exp(\lambda \rho_s^2(x))+C(\lambda)\E^{(s,x)}[t\wedge \tau_n-s],$$
where $\tau_n:=\inf \{t\in [s,T): \rho_t(X_t)\geq n\}$. Since
$\tau_n\uparrow T$ as $n\rightarrow \infty$, we further conclude that
$$\theta_s(t)\leq \exp(\lambda \rho_s(x)^2)+C(\lambda)(t-s)$$
for any $\lambda>0$ and $(s,t)\in \Lambda$.  In particular, here
$\theta_s$ is continuous and
$$M_t:=2\sqrt2\lambda\int_s^t\rho_r(X_r)\exp\left(\lambda
  \rho_r^2(X_r)\right)\vd b_r$$
is a square integrable martingale.  By Fubini's theorem,
along with Eq.~\eqref{Ito-e1} and Eq.~\eqref{e:est-rho-1}, we have
\begin{align*}
  \frac{\theta_s(t+r)-\theta_s(t)}{r}&\leq \frac{\lambda c_1}{r}\int_t^{t+r}\theta_s(u)\,\vd u
                                       -\frac{\lambda}{2r}\int_{t}^{t+r}\E^{(s,x)}\l[\exp\l(\lambda \rho_u^2(X_u)\r)\gamma(\rho_u^2(X_u))\r]\,\vd u\\
                                     &\leq \frac{\lambda c_1}{r}\int_t^{t+r}\theta_s(u)\,\vd u-\frac{\lambda}{2r}\int_t^{t+r}\theta_s(u)\gamma(\lambda^{-1}\log \theta_s(u))\,\vd u
\end{align*}
where the second inequality comes from the fact that for $\lambda>0$, the function 
$r\mapsto r\gamma(\lambda \log r)$ is convex for $r\geq 1$.
Therefore,
$$\theta_s'(t)\leq \lambda c_1\theta_s(t)-\frac{\lambda}2\theta_s(t)\gamma(\lambda^{-1}\log \theta_s(t)),\quad t\in [s,T).$$
Then, by a similar discussion as in the fixed metric case (see the
proof of \cite[Corollary 5.7.6]{Wbook1}), we obtain
\begin{align}\label{esti-theta}
  \theta_s(t)\leq G^{-1}(\lambda (t-s)/4)\vee c_2<\infty,
\end{align}
for some positive constant $c_2$ where
$$G(r):=\int_r^{\infty}\frac{\vd s}{s\gamma(\lambda^{-1}\log s)},\quad r>1.$$
In particular, for $\gamma(r)=\alpha r^{\delta}$ for $\delta>1$ and
$\alpha>0$, we have
$$G(r)=\frac{\lambda^{\delta}}{\alpha(\delta-1)}(\log r)^{1-\delta}.$$
Combining this with Eq.~\eqref{esti-theta}, we complete the proof of
(ii) and (iii).
\end{proof}

\proof[Acknowledgements]This work has been supported by the Fonds
National de la Recherche Luxembourg (FNR) under the OPEN scheme
(project GEOMREV O14/7628746). The first named author acknowledges
support by NSFC (Grant No.~11501508) and Zhejiang Provincial Natural
Science Foundation of China (Grant No. LQ16A010009).

\bibliographystyle{amsplain}%
\bibliography{evolution5}

\end{document}